\documentclass[11pt,a4paper]{amsart}
\usepackage[left=3.5cm,right=3.5cm,top=3.5cm,bottom=3.5cm]{geometry}
\usepackage{enumerate}
\usepackage{leftidx}
\usepackage{mathtools,setspace}
\usepackage{amsmath,amscd,amssymb,amsthm,mathrsfs,stmaryrd}
\usepackage[arrow,matrix]{xy}
\usepackage{graphicx,color,tikz}
\usepackage[colorlinks,linkcolor=blue,anchorcolor=blue,citecolor=blue,backref=page]{hyperref}
\usepackage{mathabx}
\usetikzlibrary {graphs}

\usetikzlibrary{positioning,bending}
\usetikzlibrary{arrows.meta}

\DeclareMathOperator{\gal}{Gal}

\DeclareMathOperator{\ab}{ab}
\DeclareMathOperator{\GL}{GL}

\DeclareMathOperator{\en}{End}

\theoremstyle{definition}
\newtheorem{definition}{Definition}[section]
\newtheorem{example}[definition]{Example}
\newtheorem{remark}[definition]{Remark}

\theoremstyle{plain}
\newtheorem{theorem}[definition]{Theorem}
\newtheorem{corollary}[definition]{Corollary}
\newtheorem{lemma}[definition]{Lemma}
\newtheorem{proposition}[definition]{Proposition}
\newtheorem{conjecture}[definition]{Conjecture}
\newtheorem*{theorem*}{Theorem}

\newcommand{\F}{{ \mathbb F }}
\newcommand{\N}{{ \mathbb N }}
\newcommand{\Q}{{ \mathbb Q }}
\newcommand{\Z}{{ \mathbb Z }}

\renewcommand{\P}{{ \mathbb P }}

\newcommand{\p}{{ \mathfrak p }}

\renewcommand{\O}{{ \mathcal O }}
\newcommand{\Fab}{F^{\text{ab}}}
\def\ab{\text{ab}}

\author[A. Ferraguti]{Andrea Ferraguti}
\thanks{\textit{Andrea Ferraguti:} \href{mailto:and.ferraguti@gmail.com}{\texttt{and.ferraguti@gmail.com}}}

\author[P. Ingram]{Patrick Ingram}
\thanks{\textit{Patrick Ingram:} \href{mailto:pingram@yorku.ca}{\texttt{pingram@yorku.ca}}}

\author[C. Pagano]{Carlo Pagano}
\thanks{\textit{Carlo Pagano:} \href{mailto:carlopagano@google.com}{\texttt{carlopagano@google.com}}}

\makeatletter
\renewcommand{\@setauthors}{%
  \begingroup
  \trivlist
  \centering\footnotesize
  \@topsep30\p@\relax
  \advance\@topsep by -\baselineskip
  \item\relax
  \begin{tabular*}{\textwidth}[t]{@{\extracolsep{\fill}}ccc@{}}
    \MakeUppercase{Andrea Ferraguti} & \MakeUppercase{Patrick Ingram} & \MakeUppercase{Carlo Pagano} \\[3pt]
    \textit{Universit\`a di Torino} & \textit{York University} & \textit{Concordia University} \\
    & & \textit{Google DeepMind}
  \end{tabular*}
  \endtrivlist
  \endgroup
}
\makeatother

\title[]{Abelian dynamical Galois groups over global function fields}
\keywords{Arithmetic dynamics, arboreal Galois representations, global fields.}

\begin{document}
\maketitle
\begin{abstract}
We establish a function field analogue of a recent conjecture of Andrews--Petsche. Our main result characterizes abelian dynamical Galois groups to be precisely the isotrivial ones, whenever the degree of the polynomial is smaller than $p$, the characteristic of the field.

The proof of this characterization is achieved in four independent steps as follows:
$$\text{Abelian} \implies \text{Finite ramification} \implies \text{PCF map} \implies \text{Isotrivial map} \implies \text{Isotrivial pair}, 
$$
and works more generally for maps with a superattracting fixed point. 

We observe that this chain of implications is sharp in the degree: as soon as one reaches $p$ there are new non-isotrivial examples coming from Drinfeld modules. We propose a full conjectural classification in all degrees, taking into account all of the new exotic examples coming from Drinfeld modules and their associated Latt\`es maps.

\end{abstract}
\section{Introduction}
This paper is devoted to establish a function field analogue of the following conjecture proposed in $2020$ by Andrews--Petsche \cite{andrews}. 
\begin{conjecture}\label{ap_conjecture}
    Let $K$ be a number field, $f\in K[x]$ have degree $d\ge 2$ and $\alpha\in K$ be non-exceptional for $f$. Then $G_\infty(f,\alpha)$ is abelian if and only if $(f,\alpha)$ is $K^{\text{ab}}$-conjugate to $(x^d+\zeta)$ or $(\pm T_d(x),\zeta+\zeta^{-1})$, where $T_d$ is the Chebishev polynomial of degree $d$ and $\zeta$ is a root of unity.
\end{conjecture}

Our main result in this direction is the following.

\begin{theorem}\label{main_thm}
Let $d\ge 2$ be an integer, and let $p$ be a prime with $p>d$. Let $F$ be a global function field of characteristic $p$, let $f\in F(x)$ be a rational function having a superattracting fixed point, and let $\alpha\in F$. Suppose the pair $(f,\alpha)$ is not exceptional. Then $G_\infty(f,\alpha)$ is abelian if and only if $(f,\alpha)$ is $F^{\text{ab}}$-conjugate to a constant pair.
\end{theorem}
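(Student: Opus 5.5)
The plan is to follow the chain of implications announced in the abstract:
$$\text{Abelian} \implies \text{finite ramification} \implies \text{PCF} \implies \text{isotrivial map} \implies \text{isotrivial pair},$$
proving each step separately. The converse direction is direct. If $(f,\alpha)$ is $F^{\ab}$-conjugate to a constant pair $(g,\beta)$ with $g\in\F_q(x)$ and $\beta\in\F_q$, then every preimage $g^{-n}(\beta)$ is algebraic over $\F_q$. Hence the splitting field of $f^n(x)-\alpha$ lies in the compositum of $F^{\ab}$ with $\overline{\F}_q F$. This compositum is abelian over $F$, so $G_\infty(f,\alpha)$ is abelian.

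\textbf{Step 1: abelian implies finite ramification.} Let $K_\infty$ be the field generated by all iterated preimages of $\alpha$, and assume $\gal(K_\infty/F)$ is abelian. At a place $\p$ of good reduction where $\alpha$ avoids the postcritical set modulo $\p$, the extension $K_\infty/F$ is unramified. The real work is at the bad places. There I will use that $p>d$: then $f$ is tamely ramified over every point, and the local extensions at $\p$ generated by the preimages are tame. An abelian tame extension of a local field has inertia group that is a quotient of $\prod_\ell \Z_\ell(1)$ with $\ell\neq p$. Its image is controlled by the finite residue field, which is a cyclic quotient of order dividing $q^m-1$ after a finite unramified base change. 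I will combine this with the superattracting fixed point $\gamma$. Near $\gamma$, the iterates behave like $x\mapsto x^e$ for some $e<p$, which gives a Kummer-type description of the local tower. The goal is to show that only finitely many places ramify in $K_\infty$.

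\textbf{Step 2: finite ramification implies PCF.} If some critical point had an infinite forward orbit, then, following the function field analogue of the Zsigmondy/primitive prime divisor arguments, the values $f^n(c)-\alpha$ would acquire new primes of odd valuation as $n$ grows. Since $p>d$ and the extension is tame, these would ramify in $K_\infty$. That would give infinitely many ramified places, contradicting Step 1.

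\textbf{Step 3: PCF implies isotrivial.} This is the step I expect to be the main obstacle. The approach is a height argument over function fields. A PCF map has critical height zero, so its dynamical invariants are all of canonical height zero. In the tame range $p>d$, a Benedetto/Baker-type rigidity applies: a non-isotrivial map has points of positive canonical height, and critical height zero forces the moduli point of $f$ to be constant. Concretely, I will show that the image of $f$ in the moduli space $\mathcal M_d$ has zero height with respect to an ample class, using the fact that the critical height is comparable to the moduli height. I will then conclude that $f$ is isotrivial. The superattracting fixed point is used to normalize $f$ so that this comparison can be carried out. The hypothesis $p>d$ is essential here, because Drinfeld/Latt\`es examples show that the comparison fails at degree $p$.

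\textbf{Step 4: isotrivial map implies isotrivial pair.} After conjugating over a finite extension, we may assume $f$ is defined over $\overline{\F}_p$. We must then show that $\alpha$ is conjugate to a constant. If $\alpha$ were nonconstant, it would have positive canonical height. Its preimage tower would then ramify at infinitely many places, namely the places where $\alpha$ meets the finite postcritical set. Each such collision yields tame ramification by non-exceptionality, contradicting Step 1.

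Finally, we must check that the field of definition of the conjugacy lies in $F^{\ab}$. The conjugating Möbius map is defined over the field generated by finitely many preimage points and fixed points. This field lies in $K_\infty$ together with a constant extension, both of which are abelian over $F$.
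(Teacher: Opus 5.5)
Your Step 4 fails, and this is the decisive gap. Once $f$ is conjugate over $L$ to a PCF map $g\in\overline{\F}_p(x)$, the postcritical set $P$ of $g$ is a finite set of constants containing $\infty$, and $g$ has good reduction everywhere. The places where a nonconstant basepoint $\beta$ ``meets'' $P$ are therefore just the zeros of the finitely many functions $\beta-c$ with $c\in P\setminus\{\infty\}$, together with the poles of $\beta$. So $L(g^{-\infty}(\beta))/L$ is ramified at only finitely many places, and there is no contradiction with Step~1. For example, take $p$ odd, $g=x^2$ and $\beta=t$ over $\F_p(t)$. The resulting field $\F_p(t)(\zeta_{2^\infty},t^{1/2^\infty})$ is ramified only at $t=0,\infty$, yet its Galois group is non-abelian. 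Positive canonical height of $\beta$ does not create new ramified places, so finite ramification cannot distinguish constant basepoints from nonconstant ones. The paper instead detects non-abelianity \emph{locally at a pole} $v$ of $\beta$. Since $g$ has constant coefficients and a pole of order $e<p$ at $\infty$, it has a B\"ottcher coordinate $b_g\in x+\O_{L_v}\llbracket 1/x\rrbracket$. This is the fixed point of the contraction $h\mapsto (h\circ g)^{1/e}$, which requires $e$ to be invertible. The map $b_g$ sends the preimages of $\beta$ lying in $\{|x|_v>1\}$ Galois-equivariantly onto the Kummer tree of $x^e$ over $b_g(\beta)$. That Kummer tower has unbounded tame inertia, hence is not abelian. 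So $|\beta|_v\le 1$ at every place, and the product formula gives $\beta\in\overline{\F}_p$.

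Step 1 also lacks a working mechanism. The bad places are finite in number and do not matter. The danger is the possibly infinitely many good places where $\alpha$ collides modulo $\mathfrak p$ with the postcritical orbit, which is infinite if $f$ is not PCF. Bounding tame inertia in an abelian local extension bounds the ramification index at each such place, not the number of such places. The missing idea is global and uses that abelian extensions are Galois. Suppose $\gamma\in f^{-\infty}(\alpha)\cap F^{\text{ab}}$ and $\mathfrak p$ ramifies in $F(\gamma)$ but not in $F(f(\gamma))$. Because $F(\gamma)/F$ is Galois, \emph{every} prime of $F(f(\gamma))$ above $\mathfrak p$ ramifies in $F(\gamma)$. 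Hence all of them divide $\Delta(f(\gamma))$, where $\Delta(t)$ is the discriminant of the numerator of $f(x)-t$, which is nonzero since $p\nmid d$. This bounds the Weil height of $\Delta(f(\gamma))$ below by a positive multiple of the degree of the place of $\F_p(t)$ under $\mathfrak p$. Backward-orbit points have bounded height, so only finitely many $\mathfrak p$ can occur.

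In Step 3, the theorems of Baker and Benedetto run the wrong way: for non-isotrivial maps, height-zero points are preperiodic, which is perfectly consistent with being PCF. The comparison of critical height with a moduli height is the entire content of the step, and you do not supply it. The paper instead invokes Ingram's Thurston-type rigidity $X^{\text{PCF}}_{d,e}(K)=X^{\text{PCF}}_{d,e}(k)$ for the normal form with a superattracting point of order $e$, in characteristic $>d$.

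Two smaller points. In Step 2, the Zsigmondy-type input is Samuel's theorem applied to the superelliptic curves $y^\ell=H_n(x)$. This needs their non-isotriviality (via Hindes), so isotrivial $f$, which is automatically PCF, must be set aside first. Also, the relevant valuations are those not divisible by the local degree $e<p$ at the wandering critical point, not the odd ones. Finally, the conjugacy should be pinned down by three backward-orbit points alone, since fixed points of $f$ need not lie in $F^{\text{ab}}$.
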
 

We now briefly summarize the logic of the proof of Theorem \ref{main_thm}. This consists of four independent steps that can be diagrammatically represented as
$$\text{Abelian} \implies \text{Finite ramification} \implies \text{PCF map} \implies$$
$$\implies \text{Isotrivial map} \implies \text{Isotrivial pair}. 
$$
In \emph{each} of these steps the assumption $d<p$ plays a crucial role in a somewhat different fashion. 

In the first step we adapt an argument of Ferraguti--Ostafe--Zannier \cite{ferostzan} to global function fields. The key idea here is to show an upper bound of the size of the largest prime ramifying in terms of the height of a discriminant polynomial purely depending on $f$. Here the assumption $d<p$ is used in verifying that such discriminants cannot be constantly $0$. 

In the second step we adapt an argument of Bridy--Ingram--Jones--Juul--Levy--Manes--Rubinstein-Salzedo--Silverman \cite{bridy} to global function fields. As this argument relies crucially on Faltings' theorem, some care needs to be paid in its function field analogue to isotrivial curves. We use $d<p$ here as well, in checking that a local argument with Newton polygons provided in \cite{bridy} goes through. 

The third step is provided by a version of Thurston rigidity theorem proved by the second named author \cite{ingram2}, and the assumption $d<p$ here is vital, as otherwise one has the families $x^p+tx$. 

For the fourth step, the situation is as follows. One has a pair $(f,\alpha)$, where $f$ is a function with constant coefficients, $G_{\infty}(f,\alpha)$ is abelian, and one wants to conclude that $\alpha$ is also constant. Thanks to the product formula, it suffices to show that $\alpha$ cannot have negative valuation at any place.  The key idea here is to use local \emph{Böttcher coordinates} at every such place: these coordinates are able to detect a large Kummer subextension in the local arboreal field, as soon as the base point $\alpha$ lands sufficiently close to infinity. When $f$ is already isotrivial, happily, ``sufficiently close" simply means $|\alpha|>1$. Hence to grant all of the local Galois groups a chance to be abelian already forces $|\alpha| \leq 1$ at every place, which by the product formula forces $\alpha$ to be constant. Here the assumption $d<p$ is crucially used to show the existence of Böttcher coordinates.

The idea of exploiting the Galois invariance of Böttcher coordinates to study local arboreal representations, has been introduced by the second named author for polynomials in \cite{ingram1}, where the entire tree is recognized to be a Kummer tree after the Böttcher change of variable. We adapt that work to superattracting maps, where instead it is only a canonical subtree coming from intersecting the tree with the Böttcher's region, and whose degree equals the local degree of the map at $\infty$. This gives a Kummer subrepresentation in the arboreal field. 

The idea of identifying a systematic \emph{root extraction procedure} played already a role in the context of Conjecture \ref{ap_conjecture}. This can be found in the joint work of the first and third author settling the conjecture for unicritical PCF polynomials with periodical critical orbit \cite{ferpag2}. For a deeper study of this this subject we refer to \cite{Hamblen-Jones}. While in these previous works the basic machinery of root extraction is \emph{global}, and based on special algebraic identities available for functions with periodic orbits, in the present paper the machinery of root extraction is genuinely \emph{local}, and instead provided by the analytic change of variable given by Böttcher coordinates. 

Finally we explore what happens from degree $p$ onwards. New phenomena emerge here, through the presence of maps associated with endomorphisms of CM Drinfeld modules and corresponding Latt\`es maps. We make the following conjecture; we denote by $C_{f,\alpha}$ the set of critical points of $f$ whose forward orbit contains $\alpha$.
\begin{conjecture} \label{our conjecture: intro}
	Let $F/\F_q(T)$ be a finite extension, let $f\in F[x]$ with $f'\not\equiv 0$ have degree $\ge 2$ and be such that $C_{f,\alpha}\subseteq F^{\text{sep}}$, and let $\alpha\in F$. Then $G_\infty(f,\alpha)$ is virtually abelian if and only if one of the following holds:
	\begin{enumerate}
		\item The pair $(f,\alpha)$ is $\overline{F}$-conjugate to a constant pair.
		\item The pair $(f,\alpha)$ is $\overline{F}$-conjugate to a pair $(\psi,\beta)$ where $\psi$ is a Drinfeld-Latt\`es map for a CM Drinfeld module $\phi$ of generic characteristic and $\beta$ is preperiodic for $\psi$.
	\end{enumerate} 
\end{conjecture}

For more background on the terminology see Section \ref{Section: Drinfeld}. Conjecture \ref{our conjecture: intro} can be thought as the full function field analogue of Conjecture \ref{ap_conjecture}. Theorem \ref{main_thm}, when specialized to polynomials, shows that Conjecture \ref{our conjecture: intro} holds in degree smaller than the characteristic. In Theorem \ref{abelian_quotients} we prove that Conjecture \ref{our conjecture: intro} holds also when $\psi$ is either a Drinfeld map or a Drinfeld-Latt\`es map. 

Finally, we remark that the reduction step provided by Thurston to show 
$$\text{PCF map} \implies \text{Isotrivial map}
$$
can be used to prove that Conjecture \ref{ap_conjecture} implies the same conjecture over function fields of curves defined over number fields, as in the following theorem. We do not include a proof since it would deviate too much from the paper's setup. 
\begin{theorem*}
	Assume Conjecture \ref{ap_conjecture}. Let $K$ be a number field, let $C$ be a smooth projective curve defined over $K$ and let $F$ be its function field. Let $f\in F[X]$ be a polynomial of degree $d\ge 2$ and let $\alpha\in F$. Then $G_\infty(f,\alpha)$ is abelian if and only if $f$ is $F^{\text{ab}}$-conjugate to $(x^d,\zeta)$ or $(\pm T_d(x),\zeta+\zeta^{-1})$.
\end{theorem*}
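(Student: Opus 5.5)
The plan is to run the same four-step chain as for Theorem \ref{main_thm}, now in characteristic $0$, where the restriction $d<p$ disappears. The chain reduces the function field statement to a statement about a pair defined over a number field, and Conjecture \ref{ap_conjecture} then finishes it.

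\textbf{The ``if'' direction.} This is the standard computation. The arboreal tower of $(x^d,\zeta)$ is generated by roots of unity together with Kummer radicals of $\zeta$, and $\zeta$ is itself a root of unity, so the tower is cyclotomic. The tower of $(\pm T_d,\zeta+\zeta^{-1})$ is cyclotomic for the same reason, via $T_d(z+z^{-1})=z^d+z^{-d}$. Both towers are therefore abelian over $F^{\text{ab}}$, hence over $F$, since conjugating by an affine map defined over $F^{\text{ab}}$ only enlarges the field by an abelian extension.

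\textbf{Steps one to three: from abelian to an isotrivial map.}
\begin{enumerate}
\item Adapt the Ferraguti--Ostafe--Zannier argument to the places of $C$. Abelianity bounds the ramification in the tower, which yields finite ramification. The nonvanishing of the relevant discriminant polynomials is automatic in characteristic $0$.
\item Use finite ramification to force the map to be PCF, following Bridy et al. Here Faltings' theorem is replaced by the function field Mordell conjecture (Manin, Grauert). One treats the isotrivial curves separately, exactly as the paper does in positive characteristic.
\item Apply the rigidity result of \cite{ingram2} (Thurston rigidity; for polynomials there are no Latt\`es exceptions). This gives an affine $\mu$ with $\mu^{-1}\circ f\circ\mu\in\overline{K}[x]$.
\end{enumerate}
Normalizing $f$ to be monic and centered requires only a $(d-1)$-st root of the leading coefficient, which is a Kummer extension. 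After adjusting $\mu$ accordingly, it can be taken over $F^{\text{ab}}$.

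\textbf{Step four: the base point becomes constant.} Polynomials have a superattracting fixed point at $\infty$. Hence the local B\"ottcher argument applies at every place $v$ of $C$, and in characteristic $0$ B\"ottcher coordinates always exist. If $|\mu^{-1}(\alpha)|_v>1$, the B\"ottcher coordinate produces a Kummer subtower that is non-abelian over the completion, a contradiction. So $\mu^{-1}(\alpha)$ is integral at every place, and by the product formula it is constant.

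\textbf{Descent to a number field.} At this point, after a finite abelian extension of $F$ and a finite extension of constants $K'/K$, the pair is $(g,\beta)$ with $g\in K'[x]$ and $\beta\in K'$. Its arboreal tower over $FK'$ is generated by constants. Since $FK'$ is regular over $K'$, the restriction map identifies $G_\infty(g,\beta)$ over $FK'$ with $G_\infty(g,\beta)$ over $K'$. The former is a subgroup of an abelian group, using that $G_\infty(f,\alpha)$ is abelian and that the conjugation is abelian. So $G_\infty(g,\beta)$ over $K'$ is abelian. Conjecture \ref{ap_conjecture} then makes $(g,\beta)$ $K'^{\text{ab}}$-conjugate to one of the model pairs, and composing the two conjugacies gives the claim.

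\textbf{Expected obstacle.} The delicate point is the field of definition of the conjugacies, namely keeping everything inside $F^{\text{ab}}$ rather than $\overline{F}$. Two places need care:
\begin{itemize}
\item In step three, the rigidity theorem only gives conjugacy over $\overline{F}$. One must check that the conjugating map can be taken over $F^{\text{ab}}$: for centered monic forms the affine conjugacies differ only by $(d-1)$-st roots of unity, and these are abelian.
\item In the descent, the $K'^{\text{ab}}$-conjugacy from the conjecture and the earlier conjugacy must combine into one over $F^{\text{ab}}$. This holds because $K'^{\text{ab}}F\subseteq F^{\text{ab}}$.
\end{itemize}
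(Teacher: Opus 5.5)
Your chain breaks at its first link. In the proof of Theorem \ref{abelian_implies_fr}, as in Ferraguti--Ostafe--Zannier, the height argument never bounds the number of newly ramified places directly. It only shows that such a place lies above a place $\mathfrak q$ of the rational base field with $\deg\mathfrak q\le B$, where $B$ depends only on $(f,\alpha)$. Finiteness then comes from the fact that $\F_p(t)$ has only finitely many places of bounded degree (for number fields: finitely many primes of bounded norm). Over $F=K(C)$ with $K$ a number field, the same computation with the geometric height gives the same bound. But $K(t)$ already has infinitely many places of degree one (the places $t=a$, $a\in K$), so nothing finite follows: infinitely many levels of the tower could each contribute new ramified places of bounded degree. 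Hence ``abelian $\Rightarrow$ finite ramification'' is not established, and Steps 2--3 receive no input. Step 2 would need repair too. Lemma \ref{ramification_index} produces a single finite $L$ in which all $A_m$ become $\ell$-th powers by using finiteness of the class group and finite generation of the units of $\O_{F,S}$. Over $K(C)$ the units contain $K^\times$, and $K^\times/K^{\times\ell}$ is infinite; you would have to pass to $\overline{K}F$ and use Manin--Grauert there.

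The missing idea is to use the assumed Conjecture \ref{ap_conjecture} fibrewise, by specialization. (The paper omits this proof, only indicating that Thurston rigidity is the key reduction.) Let $P$ be a closed point of $C$ outside the finite set where a coefficient of $f$ or $\alpha$ has a pole or the leading coefficient of $f$ vanishes, and let $\mathfrak P$ be a place of $F(f^{-n}(\alpha))$ above $P$. The roots of $f^n-\alpha$ are integral at $\mathfrak P$ and reduce to the roots of $f_P^n-\alpha_P$. So the splitting field of the latter over the number field $k(P)$ lies in $k(\mathfrak P)$, whose Galois group over $k(P)$ is a quotient of the decomposition group $D_{\mathfrak P}\subseteq\gal(F(f^{-n}(\alpha))/F)$. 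Thus every such $G_\infty(f_P,\alpha_P)$ is abelian, and the conjecture makes $f_P$ conjugate to $x^d$ or $\pm T_d$ (trivially so if $\alpha_P$ is exceptional). For each of these models every critical point $z$ satisfies $g^4(z)=g^2(z)$. This identity therefore holds at infinitely many specializations of each critical point of $f$, hence identically. So $f$ is PCF, and Corollary \ref{isotrivial_conjugacy} makes it isotrivial.

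From there your step four and descent work, with two corrections. First, the non-abelianity of the local Kummer tower must be re-proved over $k_v((t))$, whose residue field is a number field rather than a finite field, so Corollary \ref{basepoint_is_constant} does not apply as stated. You can use that $G_{k_v}$ acts on tame inertia through the cyclotomic character, which has open image, or simply specialize once more. Second, your field-of-definition claims are false as stated: $F(a^{1/(d-1)})$ need not lie in $\Fab$ (take $t^{1/3}$ over $\Q(t)$), and $(K')^{\ab}F\not\subseteq\Fab$ as soon as $K'\not\subseteq K^{\ab}$. Neither claim is needed. $\gal(LF_\infty(f,\alpha)/L)$ embeds into $G_\infty(f,\alpha)$ for every finite $L/F$. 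At the end, the three-point argument of $(\heartsuit)$ gives $\Fab$-conjugacy, since $f^{-\infty}(\alpha)\subseteq\Fab$ and the backward orbits of the model pairs lie in $\Q^{\ab}\subseteq\Fab$.
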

 For a survey of what has been done until recently on Conjecture \ref{ap_conjecture}, we refer to \cite{fer}. We mention here only two further works that came out afterwards. In \cite{ferpag2} the first and third author proved Conjecture \ref{ap_conjecture} for all unicritical polynomials over at most quadratic number fields, and for unicritical polynomials with periodic critical orbit over any number field. Recently, Leung and Petsche proved in \cite{petsche} Conjecture \ref{ap_conjecture} for all pairs $(f,\alpha)$ where $f$ is PCF and $\alpha$ is not $f$-preperiodic, over any number field, using equidistribution ideas.

While this work was prepared, the first and third author have developed a proof for Conjecture \ref{ap_conjecture}, established independently also by Zhuchao Ji, Jiarui Song and Junyi Xie. Both approaches rely on equidistribution results combined with rigidity results for semi-conjugacy classes of rational maps. We expect these techniques to be eventually transferrable in positive characteristics, but some additional work will be needed to capture the greater variety of phenomena generated by Drinfeld modules in our case. 
\subsection{Layout of the paper}
In Section \ref{Sct: ab impl fin ram} we perform the step
$$\text{Abelian} \implies \text{Finite ramification}.
$$
In Section \ref{Sct: fin ram implies PCF} we do the step
$$\text{Finite ramification} \implies \text{PCF map}.
$$
In Section \ref{subsection: Bottcher as formal} we introduce Böttcher's coordinates for maps with a superattracting fixed point, and in Section \ref{subsection: root extraction} we give the key tool to do the step
$$\text{Isotrivial map} \implies \text{Isotrivial pair}.
$$
This section consists of an extension of some of the results of \cite{ingram1}, to power series $f$ with a superattracting fixed point of order $e \geq 2$. We also opted for a different exposition where the existence and uniqueness of Böttcher's coordinates along with their classical limiting shape
$$\lim_{n \to \infty} (f^n(x))^{\frac{1}{e^n}},
$$
are all derived in one stroke from Banach's fixed point theorem. 

In Section \ref{Sct: Pcf impl isotriv} we give a precise version of
$$\text{PCF map} \implies \text{Isotrivial map},
$$
and references for the proof, after \cite{ingram2}.

In Section \ref{Section: Proof} we collect all the work of the previous sections and we articulate the proof of Theorem \ref{main_thm}. 

In Section \ref{Section: Drinfeld} we investigate the case of maps coming from Drinfeld modules, we propose Conjecture \ref{our conjecture} and we prove Theorem \ref{abelian_quotients}
\subsection{Human--AI collaboration} This paper has been ideated in Summer $2023$ and executed for the largest part in $2024$. At the ideation level, we have used artificial intelligence only for some steps of the proof of Theorem \ref{abelian_quotients} where public models of Gemini have provided useful references. We have drafted this work without use of AI. We have proof-read the work using \emph{Astra} and \emph{Fable}.

\subsection*{Acknowledgments} The first author has been partially supported by the ``National Group for Algebraic and Geometric Structures, and their Applications" (GNSAGA - INdAM).

 \section{Abelian implies finite ramification} \label{Sct: ab impl fin ram}
 We let $p$ be a prime, $F$ be a global function field of characteristic $p$, $f\in F(x)$ be of degree $d$ with $p\nmid d$ and $\alpha \in F$. Define
 $$F_\infty^{\text{ab}}(f,\alpha)\coloneqq F(f^{-\infty}(\alpha)\cap F^{\text{ab}}).$$
 \begin{theorem}\label{abelian_implies_fr}
     The extension $F_\infty^{\text{ab}}(f,\alpha)/F$ ramifies only at a finite number of primes.
 \end{theorem}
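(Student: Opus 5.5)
We adapt the strategy of Ferraguti--Ostafe--Zannier. The plan is to show that a prime $\mathfrak p$ of $F$ of large enough degree cannot ramify in any abelian subextension of the arboreal field. The two inputs are (a) how $\mathfrak p$ can ramify at all in $F(f^{-n}(\alpha))$, controlled by discriminants, and (b) how ramification in an abelian extension is constrained by class field theory, in particular by the size of the inertia group compared with the residue field.

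\emph{Step 1: the discriminant polynomial.} Write $f=g/h$ with coprime $g,h$. Consider the discriminant $\Delta(y)=\operatorname{disc}_x(g(x)-y\,h(x))\in F[y]$. It is nonzero because $p\nmid d$: the generic fibre of $f$ is separable, since $f'\not\equiv 0$. Away from a finite set $S$ of places (bad reduction of $f$, poles of coefficients and of $\alpha$, the leading coefficient of $\Delta$), a prime $\mathfrak p$ ramifies in $F(f^{-n}(\alpha))$ only if $\mathfrak p$ divides $\Delta(\beta)$ for some $\beta\in f^{-m}(\alpha)$ with $m<n$. This holds at the level of a prime above $\mathfrak p$ in $F(f^{-m}(\alpha))$, and is proved by the usual tower argument: $x\mapsto f(x)-\beta$ has good separable reduction when $\Delta(\beta)\not\equiv 0$.

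\emph{Step 2: abelian constraint.} Let $\mathfrak p\notin S$ ramify in $F^{\mathrm{ab}}_\infty(f,\alpha)$. Local class field theory then gives that the inertia group of $\mathfrak p$ in the abelian extension is a quotient of $\O_{\mathfrak p}^\times$. Tame ramification has order dividing $q_{\mathfrak p}-1$; wild ramification is a pro-$p$ quotient of $1+\mathfrak p$. On the other hand, Step 1 shows the ramification comes from the branch locus of the tower. The key point is that the ramification index at $\mathfrak p$ is bounded by $d^{k}$-type quantities that are independent of $\deg\mathfrak p$. Indeed, $\mathfrak p$ must first divide $\Delta(\beta)$ for some $\beta$ at a first level. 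Taking the abelian closure of that first level, following FOZ, we get that $\beta$ lies in a field of degree bounded by the abelian part. Then the inertia is generated by an element whose order divides $d$-smooth quantities (tame, since $p>d$ forces local ramification indices at good primes to be $\le d$ in each layer).

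\emph{Step 3: height bound.} This is the hardest step. The argument has to produce a bound $\deg\mathfrak p\le C(f,\alpha)$. Following FOZ, I would use that an abelian extension of $F$ in which $\mathfrak p$ ramifies tamely with index $e$ requires $e\mid q_{\mathfrak p}-1$. Conversely, the fixed field of the decomposition group contains the points $\beta$ at which $\mathfrak p\mid\Delta(\beta)$. Abelianness forces these $\beta$ to be permuted compatibly, so that $\Delta(\beta)$ is essentially defined over a field of bounded degree (the maximal abelian sub-quotient of the first level of the tree where ramification occurs is bounded). Bounding the height of $\Delta(\beta)$ for such $\beta$ gives finitely many possible $\mathfrak p$, by the product formula: $\deg\mathfrak p\le h(\Delta(\beta))\ll h(\beta)+O(1)$.

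I expect the main obstacle to be this height step. The difficulty is showing that only boundedly many levels contribute new ramified primes to the \emph{abelian} part, i.e.\ that after a bounded level the abelian subfield stops growing in ramification. I would first try the FOZ mechanism: in an abelian extension, conjugate preimages generate the same field, which forces ramification of new primes to occur already at a bounded level.
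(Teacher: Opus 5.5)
There is a genuine gap in Steps 2--3: nothing there actually bounds the degree of $\mathfrak{p}$.

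The local class field theory input is a red herring. The condition that tame inertia has order dividing $q_{\mathfrak{p}}-1$ gets weaker, not stronger, as $\deg\mathfrak{p}$ grows. For instance, Kummer extensions $F(\sqrt[\ell]{\pi})$ with $\ell\mid q-1$ are abelian and ramify at primes of arbitrarily large degree. So bounding ramification indices cannot help.

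The real problem is the inequality $\deg\mathfrak{p}\ll h(\Delta(\beta))$ that you invoke in Step 3. It holds only if \emph{every} place above $\mathfrak{p}$ in the field $L$ containing $\Delta(\beta)$ divides $\Delta(\beta)$. If only one place $\mathfrak{P}$ does, its contribution to the normalized height is roughly $[L_{\mathfrak{P}}:F_{\mathfrak{p}}]\deg\mathfrak{p}/[L:F]$, which can be negligible. Your proposed repair is to argue that $\Delta(\beta)$ lives in a field of bounded degree, or that only boundedly many levels contribute new ramified primes. Neither is justified, and a priori each is essentially equivalent to the theorem itself.

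The missing idea is that Galois-ness makes \emph{all} places above $\mathfrak{p}$ divide the discriminant, so no degree bound is needed. This is what the paper does.
\begin{itemize}
\item Take $\beta\in f^{-\infty}(\alpha)\cap F^{\mathrm{ab}}$ with $\mathfrak{p}$ ramified in $F(\beta)$ but unramified in $F(f(\beta))$.
\item Since $F(\beta)/F$ is Galois, all places of $F(\beta)$ above $\mathfrak{p}$ have the same ramification index $e>1$. As $\mathfrak{p}$ is unramified in $F(f(\beta))$, every place $\mathfrak{P}$ of $F(f(\beta))$ above $\mathfrak{p}$ ramifies in $F(\beta)$.
\item Conjugate so that $f=g/h$ with $g$ monic and $\deg g>\deg h$. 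Then $\beta$ is a root of the monic $\mathfrak{P}$-integral polynomial $g(x)-f(\beta)h(x)$. Hence $v_{\mathfrak{P}}(\Delta(f(\beta)))>0$ for \emph{every} $\mathfrak{P}\mid\mathfrak{p}$.
\item Here $\Delta(f(\beta))\neq 0$, because the primes ramifying above the finitely many $\gamma\in f^{-\infty}(\alpha)$ with $\Delta(\gamma)=0$ were put into $S$. Your Step 1 needs this exclusion too, or ``$\mathfrak{p}\mid\Delta(\gamma)$'' is vacuous.
\item Summing over all these $\mathfrak{P}$ gives $h(\Delta(f(\beta)))\ge \deg(\mathfrak{q})\log p/[F:\mathbb{F}_p(t)]$, where $\mathfrak{q}$ is the place of $\mathbb{F}_p(t)$ below $\mathfrak{p}$. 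This bound does not depend on $[F(f(\beta)):F]$ or on the level.
\item On the other hand, $h(\Delta(z))\le C_1h(z)+C_2$. Also $h(z)$ is uniformly bounded on $f^{-\infty}(\alpha)$, since $h(f(z))=d\,h(z)+O(1)$; your proposal does not state this, and it is needed.
\item Hence $\deg\mathfrak{q}$ is bounded, and only finitely many $\mathfrak{p}$ can occur.
\end{itemize}
Finally, only $p\nmid d$ is used in this step; $p>d$ and tameness play no role.
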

 \begin{proof}
 	If $\alpha$ is exceptional, the statement is obvious. Hence from now on we assume that $\alpha$ is not exceptional.
 	
     Up to replacing $F$ by a finite extension and conjugating the pair $(f,\alpha)$ over $F$ we can assume that $f=g/h$ with $g,h\in F[x]$ coprime, $g$ monic and $\deg g>\deg h$. Let $S$ be a finite set of primes of $F$ such that $\alpha$ and all the coefficients of $g$ and $h$ are $S$-integral, and such that $F/\mathbb{F}_p(T)$ is unramified outside of $S$. By induction it is easy to see that every element in $f^{-\infty}(\alpha)$ is integral at all primes not above elements of $S$.

     Now consider $\Delta(t)\coloneqq \text{Disc}(g(x)-th(x))\in F[t]$, where $t$ is transcendental over $F$. Since $p\nmid d$, $\Delta(t)$ cannot be identically zero. Therefore there are only finitely many $\gamma\in f^{-\infty}(\alpha)$ that are such that $\Delta(\gamma)=0$. For every such separable $\gamma$, let $S_\gamma$ be the set of primes of $F(\gamma)$ that ramify in $F(f^{-1}(\gamma))$. Up to adding a finite number of primes to $S$, we can assume that $S$ contains all primes lying below a prime belonging to some of the $S_\gamma$.
     
     Let $\p$ be a prime of $F$ that lies outside of $S$ and is ramified in $F_\infty^{\text{ab}}(f,\alpha)$. Clearly there exists $\beta \in f^{-\infty}(\alpha)\cap \Fab$ such that $\p$ ramifies in $F(\beta)$ but is unramified in $F(f(\beta))$. By our assumptions on $S$, we have that $\Delta(f(\beta))\ne 0$. Since the extension $F(\beta)/F$ is abelian, so is $F(f(\beta))/F$, and therefore every prime $\mathfrak P$ of $F(f(\beta))$ that divides $\p$ ramifies in $F(\beta)$. It follows that $\Delta(f(\beta))$, that is integral at all primes of $F(f(\beta))$ that lie above primes outside of $S$, has positive valuation at every such $\mathfrak P$. We normalize the valuation at $\mathfrak P$ so that $|q(t)|_{\mathfrak P}=1/p^{\deg q(t)}$, where $q(t)\in \F_p[t]$ is the irreducible polynomial corresponding to the place of $\F_p(t)$ lying below $\p$ (that we denote by $\mathfrak q$), and we let $\Delta\coloneqq \Delta(f(\beta))$ to ease the notation. Then
     $$|\Delta|_{\mathfrak P}\le \frac{1}{p^{\deg q(t)}} \mbox{ for every }\mathfrak P\mid \p,$$
     and hence
     $$\prod_{\mathfrak P\mid \p}|\Delta|_{\mathfrak P}^{[F(f(\beta))_{\mathfrak P}:F_\p]}\le \frac{1}{p^{\deg q(t)\cdot[F(f(\beta)):F]}}.$$
     Since at every place dividing $\p$ we have that $\Delta$ has positive valuation, we can isolate the contribution of these primes in computing the Weil height of $\Delta$, that is:
     $$h(\Delta)=h(\Delta^{-1})\ge \frac{1}{[F(f(\beta)):\F_p(t)]}\sum_{\mathfrak P\mid \p}[F(f(\beta))_{\mathfrak P}:\F_p(t)_{\mathfrak q}]\log^+(|\Delta|_{\mathfrak P}^{-1})=$$
     $$=\frac{[F_\p:\F_p(t)_{\mathfrak q}\rrbracket}{[F(f(\beta)):\F_p(t)]}\sum_{\mathfrak P\mid \p}[F(f(\beta))_{\mathfrak P}:F_\p]\log^+(|\Delta|_{\mathfrak P}|^{-1})\ge$$
     $$\ge \frac{1}{[F(f(\beta)):\F_p(t)]}\log\left(\prod_{\mathfrak P\mid p}|\Delta|_{\mathfrak P}^{-[F(f(\beta))_{\mathfrak P}:F_\p]}\right)\ge \frac{\deg q(t)\log p}{[F:\F_p(t)]}.$$
     On the other hand, $\Delta(t)$ is a polynomial in $t$ whose coefficients only depend on $f$, and therefore
     $$h(\Delta)\le C_1h(f(\beta))+C_2$$
     for some constants $C_1,C_2$ that depend only on $f$. Moreover, since $f(\beta)\in f^{-\infty}(\alpha)$ then $h(f(\beta))<C_3$ for some constant $C_3$ that depends on $f$ and on $\alpha$. We can therefore conclude that
     $$\deg q(t)\le \frac{C_4[F:\F_p(t)]}{\log p}$$
     for some constant $C_4$ that depends on $f$ and $\alpha$. The claim follows immediately.
 \end{proof}

 \section{Finite ramification implies PCF} \label{Sct: fin ram implies PCF}

 Let $p$ be a prime and $F$ be a global function field of characteristic $p$. Let $f\in F(x)$ be of degree $d$ such that $p>d$. For every $n\ge 1$, we let $f^n(x)=\frac{F_n(x)}{G_n(x)}$ with $F_n(x),G_n(x)\in F[x]$ coprime polynomials. In this section, we prove the following theorem, that is the analogous of \cite[Theorem 5]{bridy} in positive characteristic.
 \begin{theorem}\label{finite_ramification_implies_pcf}
    Suppose that $F_\infty(f,\alpha)/F$ ramifies at finitely many places and that $\alpha$ is not exceptional for $f$. Then $f$ is PCF.
\end{theorem}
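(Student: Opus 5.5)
We follow the strategy of \cite[Theorem 5]{bridy}, adapted to the function field setting. Suppose for contradiction that $f$ is not PCF, so some critical point $c$ of $f$ has infinite forward orbit. Let $S$ be a finite set of places of $F$, enlarged as needed, containing all places where $F_\infty(f,\alpha)/F$ ramifies, all places of bad reduction of $f$, and all places where $\alpha$ or the critical points fail to be integral. After a finite extension of $F$ we may assume that the critical points of $f$ lie in $F$. This is harmless, since ramification over a finite extension stays finite.

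\textbf{Reduction to a Diophantine problem.} For every $n$, the places of $F(f^{-n}(\alpha))$ that ramify over $F$ lie above $S$. Fix a place $\mathfrak p\notin S$, and suppose $\mathfrak p$ divides $f^m(c)-\beta$ for some $\beta\in f^{-n}(\alpha)$, in the sense that $c$ and $f^{-m}(\beta)$ meet modulo $\mathfrak P\mid\mathfrak p$. The local argument of \cite{bridy} then applies: the Newton polygon of $f(x)-\gamma$ at a point $\gamma\in f^{-(n+1)}(\alpha)$ that is $\mathfrak P$-adically close to $f(c)$ shows that some root generates a ramified extension. Concretely, $f(x)-\gamma$ has a multiple root $c$ modulo $\mathfrak P$ of multiplicity $e_c(f)\le d<p$. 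If $v_{\mathfrak P}(f(c)-\gamma)$ is positive and not divisible by $e_c(f)$, the extension is ramified, and it is tamely ramified because $p>d$. This is exactly where the hypothesis $d<p$ enters, since it rules out wild, degenerate Newton polygons. Consequently, for all $n$ and all $\beta\in f^{-n}(\alpha)$, the valuations $v_{\mathfrak P}(f^m(c)-\beta)$ outside $S$ must be divisible by suitable ramification indices $e\ge 2$. After accounting for bounded multiplicities, this says that $f^m(c)-\beta$ is, up to $S$-units, an $e$-th power in the relevant field.

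\textbf{Curves and the function field Faltings theorem.} As in \cite{bridy}, we choose $n$ large so that the curves $y^e=\prod_{\beta\in f^{-n}(\alpha)}(x-\beta)$, or the curves defined by $f^{n}(x)=\alpha+ (\text{$S$-unit})\cdot y^e$, have genus $\ge 2$. This is where non-exceptionality of $\alpha$ is used: it guarantees that $f^{-n}(\alpha)$ contains many simple points. The infinitely many orbit points $f^m(c)$ then give infinitely many $S$-integral points, over a fixed finite extension, on finitely many twists of these curves. By the function field Mordell conjecture (Grauert--Manin--Samuel), each such curve has only finitely many points unless it is isotrivial. In the non-isotrivial case we obtain a contradiction immediately, because the orbit of $c$ is infinite and $h(f^m(c))\to\infty$, so the points are pairwise distinct.

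\textbf{Main obstacle: the isotrivial case.} We must handle the situation in which the curves are isotrivial, that is, defined over $\overline{\F}_p$ after a change of coordinates, together with the Frobenius-twisted infinite families of points. Our first approach is to note that isotriviality of $y^e=P_n(x)$ forces the branch set $f^{-n}(\alpha)$ to be $\overline F$-projectively equivalent to a constant set. For infinitely many $n$, this should force $f$ itself, together with $\alpha$, to be conjugate to a constant pair. Here one would compare cross-ratios of points of $f^{-n}(\alpha)$, which must be constant. In that case we handle $f$ directly: points of an isotrivial curve are images of Frobenius iterates of finitely many points, so their heights grow like $p^k$. This is incompatible with the orbit heights $h(f^m(c))\approx d^m\hat h(c)$ lying on a single twist, except for sparse coincidences, and those are ruled out again by the tame Newton polygon analysis. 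Finally, if $f$ is conjugate to a constant map with a non-preperiodic critical point, we appeal directly to reduction: over the constant field every orbit is finite, so $c$ is preperiodic. This contradicts the assumption and completes the argument.
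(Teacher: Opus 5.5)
\textbf{Main gap: non-isotriviality of the curves.} Your architecture matches the paper's: a tame Newton polygon at places outside $S$, then infinitely many points on a superelliptic curve, then function-field Mordell. But the one step where positive characteristic needs a new input is not supplied. Samuel's theorem \cite{samuel} only gives finiteness for \emph{non-isotrivial} curves, so you must exhibit some $N$ for which the curve is non-isotrivial of genus $\ge 2$.

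Your claim that isotrivial branch sets $f^{-n}(\alpha)$ for infinitely many $n$ ``should force'' $(f,\alpha)$ to be conjugate to a constant pair is false as stated. Take $f=x^d$ and $\alpha=t$. Every $f^{-n}(t)=t^{1/d^n}\mu_{d^n}$ is isotrivial, and so is every curve $y^e=x^{d^n}-t$. Yet $(x^d,t)$ is not conjugate to a constant pair: a conjugator into $\overline{\F}_p(x)$ must send the fixed points $0,1,\infty$ to $\overline{\F}_p$-points, so it is constant, and then it cannot make $t$ constant.

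The correct weaker statement is that $f$ non-isotrivial implies $f^{-n}(0)$ non-isotrivial for $n\gg0$. This is a genuine theorem, \cite[Theorem~3.1]{hindes1}, proved via the Kodaira--Spencer map, and a remark about cross-ratios does not establish it.

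This is exactly how the paper closes the gap. Isotrivial $f$ is trivially PCF, as your own last sentence observes, so one may assume $f$ non-isotrivial. After normalizing so that $\infty$ is fixed and $\alpha=0$ is neither post-critical nor fixed, Hindes' theorem combined with \cite[Theorem~5.2]{hindes1} (a cyclic cover with non-isotrivial branch locus is non-isotrivial) produces the required curve.

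Your height paragraph does not replace this. Positivity of $\hat h(c)$ already needs $f$ non-isotrivial. Nothing you state rules out the ``sparse coincidences'' between $d^m\hat h(c)$ and $p^k h_j$; the Newton polygon analysis is local and says nothing about global heights. And once $f$ is known to be isotrivial, no height argument is needed at all.

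\textbf{Secondary gap: denominators for non-polynomial $f$.} The ramification argument only controls valuations at places where $f^m(c)$ is $\mathfrak p$-adically close to a preimage of $\alpha$. At the poles of $f^m(c)$ nothing forces divisibility by $e$, so you do not get an $e$-th power up to $S$-units. Nor can the denominators $B_m^{d^N}$ be absorbed into finitely many twists of $y^e=F_N(x)$ unless $e\mid d^N$.

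The paper handles this as follows. It writes $f^m(a)=A_m/B_m$ using homogeneous iterates, with $A_m,B_m$ coprime because $\operatorname{Res}(P,Q)$ is an $S$-unit. It chooses a prime $\ell\mid e$ with $\ell\neq p$. It then uses $y^\ell=F_N(x)$ if $\ell\mid d$, and $y^\ell=F_N(x)F_{N+1}(x)^b$ with $1+bd\equiv 0\pmod\ell$ otherwise, so that the denominator exponent $d^N(1+bd)$ is divisible by $\ell$.
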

 
 We start with a lemma that is an immediate consequence of the results of \cite{hindes1}. Choose coprime homogeneous forms $P,Q\in F[X,Y]$ of degree $d$
 such that $f=[P:Q]$. Put $P_0=X$, $Q_0=Y$, and define recursively
 $$P_{n+1}=P(P_n,Q_n), \qquad Q_{n+1}=Q(P_n,Q_n),$$
 so that $f^n=[P_n:Q_n]$. We use the compatible affine presentation
 $$F_n(x)=P_n(x,1), \qquad G_n(x)=Q_n(x,1).$$
 If $\infty$ is fixed by $f$, then $P_n(1,0)\neq 0$ and $Q_n(1,0)=0$, and consequently $\deg F_n=d^n$.
 \begin{lemma}\label{non_isotriviality}
 	Let $\ell\neq p$ be a prime. Suppose that $f$ is not isotrivial,
 	that $\infty$ is fixed by $f$, that $0$ is not post-critical for
 	$f$, and that $0$ is not fixed by $f$. Moreover, if $\ell\nmid d$ let $b\in\{1,\ldots,\ell-1\}$ be the integer satisfying $1+bd\equiv 0\pmod{\ell}$.
 	For every $n\geq 1$, define
 	$$
 	H_n(x)=
 	\begin{cases}
 		F_n(x),&\text{if }\ell\mid d,\\[2mm]
 		F_n(x)F_{n+1}(x)^b,&\text{if }\ell\nmid d.
 	\end{cases}
 	$$
 	Let $C_n$ be the smooth projective normalization of the affine
 	curve $y^\ell=H_n(x)$. Then for $n\gg 0$, the curve $C_n$ is
 	not isotrivial and $g(C_n)\to \infty$.
 \end{lemma}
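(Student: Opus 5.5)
We prove the two conclusions separately: first that $g(C_n)\to\infty$ by a Riemann--Hurwitz count, then non-isotriviality by quoting the criterion of \cite{hindes1} for superelliptic curves. Throughout, write $y^\ell=H_n(x)$ as a degree-$\ell$ cyclic (Kummer) cover of $\P^1_x$. Since $\ell\neq p$, this cover is tame, so Riemann--Hurwitz applies directly. The branch points are the roots of $H_n$ whose multiplicity is prime to $\ell$, together with $\infty$ when $\deg H_n\not\equiv 0\pmod{\ell}$.

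\textbf{Simplicity of the roots.} The roots of $F_n$ are the points of $f^{-n}(0)$ in $\mathbb{A}^1$; $\infty$ is excluded because it is fixed by $f$ and $f(\infty)=\infty\neq 0$. Since $0$ is not post-critical, none of these preimages is critical for $f^n$. Hence $F_n$ has $d^n$ distinct simple roots and $F_{n+1}$ has $d^{n+1}$ distinct simple roots.

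\textbf{Disjointness of the two root sets.} The roots of $F_n$ and of $F_{n+1}$ are disjoint. A common root $\gamma$ would satisfy $f^n(\gamma)=0$ and $f^{n+1}(\gamma)=0$, hence $f(0)=0$. This contradicts the hypothesis that $0$ is not fixed.

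\textbf{Behaviour at infinity.} Let $\deg H_n$ denote the total degree.
\begin{itemize}
\item If $\ell\mid d$, then $\deg H_n=d^n\equiv 0\pmod{\ell}$, so $\infty$ is unbranched.
\item If $\ell\nmid d$, then $\deg H_n=d^n+bd^{n+1}=d^n(1+bd)\equiv 0\pmod{\ell}$, which is exactly why $b$ was chosen. The exponent $b$ lies in $\{1,\ldots,\ell-1\}$, so it is prime to $\ell$ and every root of $F_{n+1}$ remains a branch point.
\end{itemize}

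\textbf{Genus bound.} The cover is therefore branched at at least $d^n$ points, each totally ramified since the cover has prime degree $\ell$. Riemann--Hurwitz gives
\[
2g(C_n)-2=\ell(-2)+N(\ell-1), \qquad N\ge d^n .
\]
Thus $g(C_n)\ge \tfrac{(\ell-1)(d^n-2)}{2}-\ldots$, which tends to $\infty$. The same argument shows the curve is geometrically irreducible, since $H_n$ is not an $\ell$-th power.

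\textbf{Non-isotriviality.} This is the main step, and we import it from \cite{hindes1}. The genus of a superelliptic curve $y^\ell=H(x)$ with distinct branch points is controlled by the configuration of the branch locus $B_n\subset\P^1$. If $C_n$ were isotrivial, a result in the style of Hindes would force the set $B_n$, up to $\mathrm{PGL}_2(\overline{F})$, to be defined over the constant field $\overline{\F}_p$. Precisely: an isotrivial cyclic cover of $\P^1$ of prime degree $\ell\neq p$ descends, together with its automorphism, after a Möbius change of coordinates, so the unordered branch locus becomes constant.

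Our plan is to show that this cannot happen for large $n$. The branch locus $B_n$ contains $f^{-n}(0)$, which is a large backward orbit. Suppose $B_n$ is Möbius-equivalent to a constant configuration for infinitely many $n$. Choose $n$ with $\#B_n\ge 3$ and pass to cross-ratios, which are Möbius-invariant and so must be constant for a constant configuration. The aim is to deduce that sufficiently many iterated preimages of $0$ have constant cross-ratios, which should force $f$ to be isotrivial, contradicting the hypothesis. This is exactly what \cite{hindes1} establishes: the isotriviality of $C_n$ for infinitely many $n$ implies that $f$ is isotrivial.

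The genuine obstacle is this last reduction, from constant cross-ratios of the branch points back to isotriviality of $f$. We would cite \cite{hindes1} for it rather than reprove it, checking only the tameness conditions $\ell\neq p$ and $p>d$, the latter so that all preimages are separable.
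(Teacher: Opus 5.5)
Your proposal is correct and follows the paper's proof: simplicity and disjointness of the roots of $F_n$ and $F_{n+1}$, a Riemann--Hurwitz count giving $g(C_n)\to\infty$, and then \cite{hindes1} for non-isotriviality. The paper cites two results there: Theorem~3.1 shows $f^{-n}(0)$ is non-isotrivial for $n\gg 0$, hence so is the branch locus containing it, and Theorem~5.2 passes from the branch locus to $C_n$. Your explicit check that $\infty$ is unbranched because $\ell\mid\deg H_n$ is an addition the paper leaves implicit.
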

 \begin{proof}
 	Since $p>d$, the map $f$ is separable. Since $0$ is not	post-critical, the map $f^n$ is unramified above $0$ for every	$n\geq1$. Moreover, $\infty\notin f^{-n}(0)$ because it is fixed. It follows that $F_n$ has precisely $d^n$ distinct roots, and that $F_n^{-1}(0)=f^{-n}(0)$. In particular, $0$ is not
 	exceptional for $f$.
 	
 	Notice that $\gcd(F_n,F_{n+1})=1$, since a common root $\gamma$ would satisfy $f^n(\gamma)=0=f^{n+1}(\gamma)$ and hence $f(0)=0$, contrary to the assumption that $0$ is not fixed.
 	
 	Let	$\pi_n\colon C_n\longrightarrow\mathbb P^1$	be the morphism induced by the $x$-coordinate. Every root of $F_n$ is simple in $H_n$; hence $H_n$ is not an $\ell$-th power in $\overline{F}(x)$. It follows that $C_n$ is geometrically integral and $\pi_n$ is a separable morphism of degree $\ell$.
 	
 	Since every root of $H_n$ is a branch point of $\pi_n$ and the ramification points above it all have ramification index $\ell$, it is easy to see by the Riemann--Hurwitz formula that
 	$$g(C_n)\to \infty \mbox{ as } n\to \infty.$$
 	
 	By \cite[Theorem~3.1]{hindes1}, the set $f^{-n}(0)$ is not isotrivial for $n\gg 0$. Since this set
 	is contained in the branch locus of $\pi_n$, the latter
 	is itself not isotrivial. The conclusion now follows from
 	\cite[Theorem~5.2]{hindes1}.
 \end{proof}
 
 The following lemma is the analogous of \cite[Lemma 12]{bridy} in positive characteristic.
 \begin{lemma}\label{ramification_index}
 	Suppose that $f$ is not isotrivial, that $\infty$ is fixed by $f$,
 	that $0$ is not post-critical for $f$, and that $0$ is not fixed
 	by $f$. Let $e\geq2$ be an integer which is not a power of $p$,
 	let $a\in F$ be wandering for $f$, and let $S$ be a finite set of
 	places of $F$. Then there exist $n>0$ and a place
 	$\mathfrak p\notin S$ such that
 	\[
 	v_{\mathfrak p}(f^n(a))>0
 	\qquad\text{and}\qquad
 	e\nmid v_{\mathfrak p}(f^n(a)).
 	\]
 \end{lemma}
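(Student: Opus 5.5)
Following \cite[Lemma 12]{bridy}, we argue by contradiction and reduce to a finiteness statement for rational points on the curves of Lemma \ref{non_isotriviality}. Suppose that for every $n>0$ and every place $\mathfrak p\notin S$ with $v_{\mathfrak p}(f^n(a))>0$ we have $e\mid v_{\mathfrak p}(f^n(a))$. Since $e$ is not a power of $p$, it has a prime divisor $\ell\neq p$, and then $\ell\mid v_{\mathfrak p}(f^n(a))$ at all such places. We enlarge $S$ so that it contains all poles of $a$ and of the coefficients of $f$, all places where the leading coefficients of $P,Q$ or $\text{Res}(P,Q)$ are non-units, and enough places that the $S$-unit group $\mathcal O_S^\times$ is large and $\mathcal O_S$ is a PID. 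Because $\infty$ is fixed and $a$ is $S$-integral, every $f^m(a)$ is $S$-integral. Moreover $F_m(f^k(a))$ agrees with $f^{m+k}(a)$ up to an $S$-unit, using $\deg F_m=d^m$ and the good reduction of $f$ outside $S$.

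\textbf{Step 1: producing $\ell$-th powers.} The hypothesis says that, outside $S$, the ideal of $f^n(a)$ is an $\ell$-th power. Since the class group of $\mathcal O_S$ is trivial and $\mathcal O_S^\times/(\mathcal O_S^\times)^\ell$ is finite, we get $f^n(a)=u_n y_n^\ell$ with $u_n$ in a fixed finite set $U$ of representatives. Set $x=f^k(a)$ and $n=N$ for the $N$ of Lemma \ref{non_isotriviality}.
\begin{itemize}
\item If $\ell\mid d$, then $H_N(x)=F_N(x)$, which equals $f^{N+k}(a)$ up to an $S$-unit.
\item If $\ell\nmid d$, then $H_N(x)=F_N(x)F_{N+1}(x)^b$, which equals $f^{N+k}(a)f^{N+k+1}(a)^b$ up to an $S$-unit.
\end{itemize}
In both cases $H_N(f^k(a))\in u\,(F^\times)^\ell$ for some $u$ in a fixed finite set. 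Hence for each $k\ge 0$ the point $(f^k(a),y)$ lies on one of finitely many twists $uy^\ell=H_N(x)$ of $C_N$. These twists become isomorphic to $C_N$ over a finite extension, so by Lemma \ref{non_isotriviality} they are non-isotrivial of genus at least $2$ once $N$ is large.

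\textbf{Step 2: applying the function field Mordell conjecture.} We apply the Samuel/Voloch theorem (Mordell over function fields in positive characteristic), which says that a non-isotrivial curve of genus $\ge 2$ over $F$ has finitely many $F$-rational points. In characteristic $p$ this requires care, since a curve can become isotrivial after a Frobenius twist, but $C_N$ is non-isotrivial and so finiteness holds. Each twist therefore has finitely many $F$-points, so only finitely many values $f^k(a)$ occur. This contradicts the assumption that $a$ is wandering, and so some $f^{N+k}(a)$ has a place $\mathfrak p\notin S$ with $\ell\nmid v_{\mathfrak p}$, hence $e\nmid v_{\mathfrak p}$.

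\textbf{Main obstacle.} The delicate point is the valuation bookkeeping when $\ell\nmid d$. We must check that $v_{\mathfrak p}(H_N(f^k(a)))\equiv 0\pmod \ell$ follows from the hypothesis applied to the two consecutive iterates. We also need to confirm that the auxiliary exponent $b$ handles the point at infinity correctly, so that the twist really is a model of $C_N$. Making "up to $S$-units" precise requires that the reductions of $P,Q$ stay coprime outside $S$. We also need the curves $C_N$ to be non-isotrivial in the precise sense needed for the positive-characteristic Mordell theorem, which is exactly the output of Lemma \ref{non_isotriviality}.
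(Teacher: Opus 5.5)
You follow the paper's strategy: contradiction, a prime $\ell\neq p$ dividing $e$, $\ell$-th powers up to finitely many $S$-units, points on the curves $C_N$ of Lemma \ref{non_isotriviality}, and Samuel's theorem. However, your Step 1 only works when $\infty$ is totally invariant, i.e.\ when $f$ is a polynomial. In this section $f\in F(x)$ is a rational map that merely fixes $\infty$.

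Your claim that every $f^m(a)$ is $S$-integral is false in general. Here $G_1(x)=Q(x,1)$ can be nonconstant, and nothing prevents the orbit from reducing modulo some $\mathfrak p\notin S$ onto a finite pole of the reduced map. For instance, if $f(0)=\infty$, any good-reduction place with $v_{\mathfrak p}(f^n(a))>0$ gives $v_{\mathfrak p}(f^{n+1}(a))<0$. The hypothesis constrains only positive valuations, so the ideal of $f^n(a)$ in $\mathcal O_S$ need not be an $\ell$-th power, and $f^n(a)=u_ny_n^\ell$ does not follow.

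Your claim that $F_m(f^k(a))$ equals $f^{m+k}(a)$ up to an $S$-unit also fails. Write $f^k(a)=A_k/B_k$ with $A_k=P_k(A_0,B_0)$ and $B_k=Q_k(A_0,B_0)$. Then $F_m(f^k(a))=A_{m+k}/B_k^{d^m}$, while $f^{m+k}(a)=A_{m+k}/Q_m(A_k,B_k)$. The ratio $Q_m(A_k,B_k)/B_k^{d^m}=G_m(f^k(a))$ need not be an $S$-unit unless $Q$ is a constant multiple of $Y^d$. Coprimality of the reductions of $P,Q$ does not change this.

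The repair, as in the paper, is to track the homogeneous numerators $A_m$ rather than the values $f^m(a)$.
\begin{itemize}
\item Since $\operatorname{Res}(P,Q)\in\mathcal O_S^\times$, $A_m$ and $B_m$ are coprime outside $S$. Hence $v_{\mathfrak p}(A_m)=v_{\mathfrak p}(f^m(a))$ whenever the left side is positive.
\item Therefore $\ell\mid v_{\mathfrak p}(A_m)$ for all $\mathfrak p\notin S$, and $A_m$ is an $\ell$-th power up to a fixed finite set of $S$-units.
\item Then $H_N(x_m)$ equals $A_{m+N}/B_m^{d^N}$ if $\ell\mid d$, and $A_{m+N}A_{m+N+1}^{b}/B_m^{d^N(1+bd)}$ otherwise. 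The denominators are $\ell$-th powers exactly because $\ell\mid d$, resp.\ $\ell\mid 1+bd$.
\end{itemize}
That is what the exponent $b$ is for. Your worry about whether the twist is a model of $C_N$ is misplaced: $uy^\ell=H_N(x)$ is a form of $C_N$ whatever $\deg H_N$ is.

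With this correction, the rest of your argument goes through. Using finitely many twists over $F$ instead of the paper's finite extension $L$, in which every $A_m$ becomes an $\ell$-th power, is an equivalent way to finish, and Samuel's theorem gives the contradiction.
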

 
 \begin{proof}

 	Now suppose by contradiction that for every $n>0$ and every
 	$\mathfrak p\notin S$ one has
 	\begin{equation}\label{eq:positive-divisibility}
 		v_{\mathfrak p}(f^n(a))>0
 		\quad\Longrightarrow\quad
 		e\mid v_{\mathfrak p}(f^n(a)).
 	\end{equation}
 	
 	Since $e$ is not a power of $p$, there exists a prime $\ell\neq p$ dividing $e$. We enlarge $S$ so that:
 	\begin{itemize}
 		\item $S$ is nonempty;
 		\item $P$ and $Q$ have coefficients in $\mathcal O_{F,S}$;
 		\item
 		$\operatorname{Res}(P,Q)\in\mathcal O_{F,S}^{\times}$;
 		\item $\mathcal O_{F,S}$ is a principal ideal domain.
 	\end{itemize}
 	
 	Write $a=A_0/B_0$ with coprime $A_0,B_0\in\mathcal O_{F,S}$ and $B_0\neq0$, and put
 	$$A_m=P_m(A_0,B_0),	\qquad 	B_m=Q_m(A_0,B_0) \qquad(m\geq0),$$
 	so that $f^m(a)=[A_m:B_m]$.
 	
 	We claim that $A_m$ and $B_m$ are coprime in $\mathcal O_{F,S}$ for every $m$. For $m=0$ this is obvious. If a place $\mathfrak p\notin S$ divided both $A_{m+1}$ and $B_{m+1}$, then	the reductions of $P$ and $Q$ modulo $\p$ would have the common projective zero $[\overline{A}_m:\overline{B}_m]$, contrary to the fact that their resultant is a unit at $\mathfrak p$. The claim therefore follows by induction.
 	
 	Thus, for every $m\geq1$ such that $A_m\neq0$, if $v_{\mathfrak p}(A_m)>0$ then $v_{\mathfrak p}(B_m)=0$, and hence
 	$$
 	v_{\mathfrak p}(A_m)
 	=
 	v_{\mathfrak p}(f^m(a)).
 	$$
 	It follows from \eqref{eq:positive-divisibility} that $\ell\mid v_{\mathfrak p}(A_m)$ for every $\mathfrak p\notin S$.
 	
 	Now notice the following: there exists a finite extension $L/F$ such that every $A_m$ is an $\ell$-th power in $L$. In fact, since the class group of $\O_S$ is finite, it is generated by a finite number of primes $\mathfrak p_1,\ldots,\mathfrak p_r$, and there are positive integers $n_1,\ldots,n_r$ such that $\mathfrak p_i^{n_i}$ is principal generated by some $\gamma_i$ for every $i$. It is then easy to show that $F'\coloneqq F(\sqrt[n_1]{\gamma_1},\ldots,\sqrt[n_r]{\gamma_r})$ has the property that every prime of $\O_S$ becomes principal in the integral closure of $\O_S'$ in $F'$. Since $\ell\mid v_\p(A_m)$ for every $\p\notin S$, the fractional ideal generated by $A_m$ in $\O_S'$ is generated by $\delta^\ell$ for some $\delta\in F'$. Hence $A_m$ and $\delta^\ell$ differ by a unit of $\O_S'$; however the group of units of $\O_S'$ has finite rank, and hence extracting $\ell$-th roots of a finite number of generators one finds a finite extension $L/F'$ with the desired property.
 	
 	Let $N$ be sufficiently large so that the curve $C_N$ supplied by
 	Lemma~\ref{non_isotriviality} is non-isotrivial and satisfies $g(C_N)\ge 2$. Since $a$ is wandering and $\infty$ is fixed, $f^m(a)\ne \infty$ for every $m$. Thus $B_m\neq0$ for every $m$. Put
 	\[
 	x_m=f^m(a)=\frac{A_m}{B_m}.
 	\]
 	The compatibility of the homogeneous iterates gives $P_j(A_m,B_m)=A_{m+j}$ for every $j,m\geq0$.
 	
 	Suppose first that $\ell\mid d$. Since $\ell\mid d^N$, define
 	\[
 	y_m=
 	\frac{z_{m+N}}{B_m^{d^N/\ell}}\in L.
 	\]
 	Then
 	\[
 	y_m^\ell
 	=
 	\frac{A_{m+N}}{B_m^{d^N}}
 	=
 	F_N(x_m)
 	=
 	H_N(x_m).
 	\]
 	
 	Now suppose that $\ell\nmid d$, and let $b$ be as in
 	Lemma~\ref{non_isotriviality}. Since $\ell\mid 1+bd$, we may define
 	\[
 	y_m=
 	\frac{z_{m+N}z_{m+N+1}^{\,b}}
 	{B_m^{d^N(1+bd)/\ell}}=\frac{A_{m+N}A_{m+N+1}^{\,b}}
 	{B_m^{d^N+bd^{N+1}}}=F_N(x_m)F_{N+1}(x_m)^b=
 	H_N(x_m)
 	\in L.
 	\]
 	
 	The points $x_m$ are pairwise distinct because $a$ is wandering.
 	Since $H_N$ has only finitely many roots, $H_N(x_m)\neq0$ for infinitely many $m$. For these $m$, the point $(x_m,y_m)$ belongs to the smooth locus of the affine curve $y^\ell=H_N(x)$, because $\ell\neq p$ and $y_m\neq0$. It therefore determines an $L$-rational point on its smooth projective normalization $C_N$.
 	We have consequently constructed infinitely many distinct points
 	in $C_N(L)$; the curve $C_N$ is not isotrivial and has genus $\ge 2$ by construction. We obtain thus a contradiction via Samuel's theorem \cite{samuel}.
 \end{proof}

\begin{proof}[Proof of Theorem \ref{finite_ramification_implies_pcf}]
    If $f$ is isotrivial, there is nothing to prove. Hence we can assume that it is not. By contradiction, assume that $f$ is not PCF. It is clear that for any $\beta\in f^{-\infty}(\alpha)$, the extension $F(\beta)_\infty(f,\beta)/F(\beta)$ ramifies at finitely many places as well. Also, the intersection of the post-critical set of $f$ with $f^{-\infty}(\alpha)$ is finite, and since $f^{-\infty}(\alpha)$ itself is infinite (since $\alpha$ is not exceptional)  there is always some $\beta\in f^{-\infty}(\alpha)$ that is not post-critical for $f$. Hence up to replacing $F$ by $F(\beta)$ we can assume that $\alpha$ is not post-critical for $f$. Moreover, notice that if $F_\infty(f,\alpha)/F$ is finitely ramified then so is $F'_\infty(f,\alpha)/F$ for any $F'$ that is a finite extension of $F$. Hence up to passing to a finite extension of $F$ and conjugating, we can assume that $\infty$ is a fixed point for $f$, that $f$ has a wandering critical point $x_0\in F$ and that $\alpha=0$.

The argument is now exactly the one used to prove \cite[Theorem 5]{bridy}, by making use of Lemma \ref{ramification_index} in place of \cite[Lemma 12]{bridy}. We provide here the only non-trivial modification. Let $e$ be the ramification index of $x_0$; since $p>d$, it must be $e<p$. Granted this, now the argument proceeds as in \cite[Theorem 5]{bridy}, verbatim, as the denominators present in that argument are all coprime to $p$.
\end{proof}

\section{Böttcher's coordinates} \label{subsection: Bottcher as formal}
This section is mostly an adaptation of the material in \cite{ingram1}. The difference in content is that we generalize that material to maps having a superattracting fixed points and the main difference in the exposition is that we rely on the Banach's fixed point's theorem (Theorem \ref{Thm: Banach} below) in establishing the existence of Böttcher's coordinates: the reader of both works will recognize that in the arguments in \cite{ingram1} one can find back the proof of the fixed point theorem applied to this special case. We recall the theorem here. A proof can be found for example in \cite[Theorem 9.23]{rudin}.
\begin{theorem} \label{Thm: Banach} \textup{(Banach's fixed point theorem)} Let $(X,\textup{dist})$ be a complete metric space. Let $f:X \to X$ be such that there exists a real number $\lambda<1$ such that for all $x,y$ in $X$ one has that
	$$\textup{dist}(f(x),f(y)) \leq  \lambda \cdot \textup{dist}(x,y).
	$$
	Then $f$ admits precisely one fixed point $b$. Furthermore the point $b$ can be obtained as the limit
	$$b=\lim_{n \to \infty} f^n(x)
	$$
	for any choice of $x$ in $X$.  
\end{theorem}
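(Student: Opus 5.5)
The plan is the standard contraction-mapping argument, in two parts: uniqueness of a fixed point, then existence together with the limit formula. Throughout, $\lambda<1$ is the contraction constant from the statement.

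\textbf{Uniqueness.} Suppose $b$ and $b'$ are both fixed points. Then $\textup{dist}(b,b')=\textup{dist}(f(b),f(b'))\le \lambda\,\textup{dist}(b,b')$. Since $\lambda<1$, this forces $\textup{dist}(b,b')=0$, that is, $b=b'$.

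\textbf{Existence and the limit formula.} Fix an arbitrary $x\in X$ and set $x_n=f^n(x)$.
\begin{enumerate}
\item By induction on $n$, the contraction property gives $\textup{dist}(x_{n+1},x_n)\le \lambda^n\,\textup{dist}(x_1,x_0)$.
\item For $m>n$, the triangle inequality and the geometric series give
$$\textup{dist}(x_m,x_n)\le \sum_{k=n}^{m-1}\lambda^k\,\textup{dist}(x_1,x_0)\le \frac{\lambda^n}{1-\lambda}\,\textup{dist}(x_1,x_0).$$
This tends to $0$ as $n\to\infty$, so $(x_n)$ is a Cauchy sequence.
\item By completeness of $X$, the sequence $(x_n)$ converges to some $b\in X$.
\item The map $f$ is Lipschitz, hence continuous. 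Passing to the limit in $x_{n+1}=f(x_n)$ therefore yields $f(b)=b$.
\end{enumerate}

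Since the fixed point is unique, the limit $b$ does not depend on the starting point $x$. This proves the formula $b=\lim_{n\to\infty}f^n(x)$ for every $x\in X$.

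No step here is a real obstacle. The only point that needs care is that the constant satisfies $\lambda<1$ uniformly in $x$ and $y$, rather than merely $\textup{dist}(f(x),f(y))<\textup{dist}(x,y)$ pointwise. This uniformity is exactly what makes the geometric series in step 2 converge, and so it is what yields the Cauchy property.
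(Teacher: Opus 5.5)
Your argument is correct and is the standard contraction-mapping proof of \cite[Theorem 9.23]{rudin}; the paper cites that proof rather than giving its own. Two points are tacit, as they are in Rudin: that $X\neq\emptyset$, and that the induction in step 1 uses $\lambda\ge 0$, which is harmless because the hypothesis for $\lambda$ implies it for $\max(\lambda,0)$.
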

A function $f$ as in Theorem \ref{Thm: Banach} is (usually) called a \emph{contraction}. 
\subsection{Böttcher's coordinates as formal power series}
Let $R$ be a commutative ring with unit and let $e$ be in $\Z_{\geq 2}$. From now on, we'll make the fundamental assumption that $e\in R^{\times}$.

Denote by $R\llbracket x\rrbracket$ the power series ring with coefficients in $R$. We metrize $R\llbracket x\rrbracket$ with the absolute value given by
$$|f(z)|=\text{exp}(-\text{ord}(f)),
$$
where $\text{ord}(f)$ is the first non-vanishing coefficient of $f$ if $f$ is non-zero, and is $\infty$ otherwise (and we use the convention that $\text{exp}(-\infty)=0$). This absolute value induces a metric given by the formula
$$|f-g|,
$$
for each $f,g$ in $R\llbracket x\rrbracket$. We denote this metric still by $|\cdot|$, by abuse of notation. The proof of the following proposition is straightforward. 
\begin{proposition} \label{prop: x-adic metric}
	A sequence in $(R\llbracket x\rrbracket,|\cdot|)$ is Cauchy if and only if every coefficient becomes eventually constant. The resulting power series obtained by the eventual values of all coefficients is the limit. In particular $(R\llbracket x\rrbracket,|\cdot|)$ is complete. 
\end{proposition}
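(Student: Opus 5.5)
The plan is to make the ultrametric structure explicit and reduce every claim to a statement about coefficients. Throughout I use the convention that $\text{ord}(f)$ is the index of the first non-vanishing coefficient. Since $\text{ord}(f-g)\ge \min(\text{ord}(f-h),\text{ord}(h-g))$, the metric is non-archimedean, and $|f-g|<\exp(-N)$ holds exactly when $f$ and $g$ have the same coefficients in degrees $0,\dots,N$.

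For the characterization of Cauchy sequences, take a Cauchy sequence $(f_m)$ and a degree $k$. Choose $\varepsilon=\exp(-k)$. Then there is $M$ with $|f_m-f_{m'}|<\exp(-k)$ for all $m,m'\ge M$. This forces $\text{ord}(f_m-f_{m'})>k$, so the $k$-th coefficient of $f_m$ is constant for $m\ge M$.

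Conversely, suppose every coefficient is eventually constant. Given $N$, take $M_N$ to be the maximum of the stabilization thresholds for the coefficients of degree $0,\dots,N$; this is a finite maximum. For $m,m'\ge M_N$ we then get $|f_m-f_{m'}|\le \exp(-(N+1))$, so the sequence is Cauchy.

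For the limit, let $f$ be the power series whose $k$-th coefficient is the eventual value of the $k$-th coefficients of the $f_m$. For $m\ge M_N$ the series $f_m$ and $f$ agree up to degree $N$. Hence $|f_m-f|\le \exp(-(N+1))$, so $f_m\to f$. Limits are unique because $|\cdot|$ is a genuine metric, since $|h|=0$ iff $h=0$. Completeness follows immediately.

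There is no real obstacle here. The only point needing care is the finite maximum over the first $N+1$ coefficient thresholds, which is what makes the convergence uniform in the finitely many relevant degrees. No hypothesis on $R$, including $e\in R^\times$, is needed.
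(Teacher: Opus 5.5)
Your proof is correct and is exactly the routine coefficientwise verification the paper has in mind; the paper omits the argument as straightforward. Your observation that $|f-g|<\exp(-N)$ is equivalent to agreement of coefficients in degrees $0,\dots,N$, together with the finite maximum of stabilization thresholds, covers all three claims (and you rightly read $\text{ord}(f)$ as the index of the first nonvanishing coefficient, with no hypothesis on $R$ needed).
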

\begin{remark} \label{remark: definition of 1/d-roots}
	We remark that the monoid $1+x\cdot R\llbracket x\rrbracket$ is a subgroup of $R\llbracket x\rrbracket^{\times}$ where $e$-powering acts as an automorphism. In fact, it is a standard result in commutative algebra that every element is invertible. Moreover, the endomorphism 
	$$f \mapsto f^e,
	$$
	preserves each of the subgroups $1+x^i\cdot R\llbracket x\rrbracket$, for $i \geq 1$. The induced endomorphism on
	$$\frac{1+x^i\cdot R\llbracket x\rrbracket}{1+x^{i+1}\cdot R\llbracket x\rrbracket}
	$$
	is simply multiplication by $e$, which is invertible. In particular, recalling that $R\llbracket x\rrbracket$ is complete, one deduces that the $e$-powering endomorphism of $1+x\cdot R\llbracket x\rrbracket$ is an automorphism. This is an elementary general property of filtered endomorphisms of complete filtered groups: they are automorphisms if and only if they are on every successive quotient (see \cite[Proposition 3.6]{pagano}). 
	
	We denote by $f^{\frac{1}{e}}$ the inverse automorphism, hence so far defined on $1+x \cdot R\llbracket x\rrbracket$. We extend the notation $f^{\frac{1}{e}}$ for 
	$$f=x^{e\cdot s} \cdot (1+x\cdot g(x)),
	$$
	with $s \in \Z_{\geq 0}$, by declaring
	$$f^{\frac{1}{e}}:=x^s \cdot (1+x\cdot g(x))^{\frac{1}{e}}. 
	$$
	Observe that we can further extend $(-)^{\frac{1}{e}}$ to every element $f$ of $R\llbracket x\rrbracket[x^{-1}]$ of the form
	$$f=x^{e\cdot s} \cdot (1+x\cdot g(x)),
	$$
	for $s$ in $\Z$ and $g(x)\in R\llbracket x\rrbracket$ by the same formula. 
\end{remark}
We will now do a convenient notational switch and swap the roles of $x$ and $x^{-1}$ and consider the ring $R\llbracket \frac{1}{x}\rrbracket[x]$ (which can be intuitively thought as the ring of power series around $\infty$). Formally this can be defined by starting from the ring of Laurent polynomials $R[x,x^{-1}]$, which can be completed with respect to the absolute value corresponding to $-\text{ord}$. 

Of course we have an isomorphism between $R\llbracket x\rrbracket[x^{-1}]$ and $R\llbracket \frac{1}{x}\rrbracket[x]$, obtained simply by swapping $x$ and $x^{-1}$. We can transport along this isomorphism the map $(-)^{\frac{1}{d}}$ defined in Remark \ref{remark: definition of 1/d-roots} and thus obtain a map
$$(-)^{\frac{1}{e}}\colon \bigcup_{s \in \Z}x^{e \cdot s} \cdot \left(1+\frac{1}{x}\cdot R\left\llbracket \frac{1}{x}\right\rrbracket\right) \to \bigcup_{s \in \Z}x^{s} \cdot \left(1+\frac{1}{x} \cdot R\left\llbracket \frac{1}{x}\right\rrbracket\right).
$$
\begin{remark} \label{rmk: 1/e is a bijection}
	The map defined right above is the compositional inverse of $e$-powering on the set 
	$$\bigcup_{s \in \Z}x^{e \cdot s} \cdot \left(1+\frac{1}{x}\cdot R\left\llbracket \frac{1}{x}\right\rrbracket\right).
	$$
\end{remark}
\begin{definition}
	We let $\mathcal{B}$ be the $R\llbracket \frac{1}{x}\rrbracket$-coset of the element $x$, that is, we put
	$$\mathcal{B}\coloneqq x+R\left\llbracket \frac{1}{x}\right\rrbracket.
	$$
\end{definition}
Observe that through translation by $x$ the coset $\mathcal{B}$ inherits the structure of complete metric space from $R\llbracket \frac{1}{x}\rrbracket$ thanks to Proposition \ref{prop: x-adic metric}.

We now fix an element $f(x)$ in $R\llbracket \frac{1}{x}\rrbracket[x]$ of the form
$$f\coloneqq x^e+\sum_{i=0}^{e-1}a_i \cdot x^i+\sum_{j=0}^{\infty}\frac{c_j}{x^j}. 
$$
Below we shall suggestively refer to this shape for $f$ as to have a pole of order $e$ at $\infty$ (which, by definition, we demand to include the monicity assumption on $x^e$). 
\begin{remark} \label{remark: composing with f is well defined on B}
	For $g$ in $\mathcal{B}$, the expression $g \circ f$ is well defined. In fact, let
	$$g\coloneqq x+\sum_{i=0}^{\infty}\frac{b_i}{x^i}.
	$$
	Then formally taking
	$$g(f(x))\coloneqq f(x)+\sum_{i=0}^{\infty}\frac{b_i}{x^{e \cdot i}(1+\sum_{j=1}^{\infty}\frac{\gamma_j}{x^j})^i}
	$$
	and recalling that for every $i$ the inverse of $\left(1+\sum_{j=1}^{\infty}\frac{\gamma_j}{x^j}\right)^i$ lies in $1+R\llbracket \frac{1}{x}\rrbracket$, we see that each degree is touched finitely many times and we get a series that converges $\frac{1}{x}$-adically; moreover
	$$g(f(x)) \in x^e \cdot \left(1+\frac{1}{x} \cdot R\left\llbracket \frac{1}{x}\right\rrbracket\right). 
	$$
	Hence we can associate to $g$ the element
	$$(g(f(x)))^{\frac{1}{e}}.
	$$
	We denote this element as $\phi_{e,f}(g)$.
\end{remark}
The next proposition is the key that allows to prove the existence of local Böttcher's coordinates. 
\begin{proposition} \label{prop: phi is a contraction}
	We have that $\phi_{e,f}\colon \mathcal B \to \mathcal B$ is a contraction. 
	\begin{proof}
		
		Let $g_1,g_2$ be two distinct elements of $\mathcal{B}$. Let us write
		$$g_1(x)=x+b_0+\sum_{i=1}^{\infty}\frac{b_{-i}}{x^i},
		$$
		and
		$$g_2(x)=x+c_0+\sum_{i=1}^{\infty}\frac{c_{-i}}{x^i}.
		$$
		We write
		$$(g_1 \circ f)(x)=f(x)+b_0+\sum_{i=1}^{\infty}\frac{b_{-i}}{(f(x))^i}
		$$
		and
		$$(g_2\circ f)(x)=f(x)+c_0+\sum_{i=1}^{\infty}\frac{c_{-i}}{(f(x))^i}.
		$$
		To compute these we write
		$$f(x)=x^e+a_{e-1}x^{e-1}+\ldots+a_0+\sum_{i=1}^{\infty}\frac{a_{-i}}{x^i}
		$$
		and expand as
		$$(g_1 \circ f)(x)=x^e \cdot \left(1+\sum_{j=1}^{\infty}\frac{a_{e-j}}{x^j}+\frac{b_0}{x^e}+\sum_{i=1}^{\infty}\frac{b_{-i}}{x^{e(i+1)} \cdot \left(1+\sum_{j=1}^{\infty}\frac{a_{e-j}}{x^j}\right)^i}\right),
		$$
		and
		$$(g_2 \circ f)(x)=x^e \cdot \left(1+\sum_{j=1}^{\infty}\frac{a_{e-j}}{x^j}+\frac{c_0}{x^e}+\sum_{i=1}^{\infty}\frac{c_{-i}}{x^{e(i+1)} \cdot \left(1+\sum_{j=1}^{\infty}\frac{a_{e-j}}{x^j}\right)^i}\right).
		$$
		Hence we conclude that
		$$\phi_{e,f}(g_1)=x \cdot \left(1+\sum_{j=1}^{\infty}\frac{a_{e-j}}{x^j}+\frac{b_0}{x^e}+\sum_{i=1}^{\infty}\frac{b_{-i}}{x^{e(i+1)} \cdot \left(1+\sum_{j=1}^{\infty}\frac{a_{e-j}}{x^j}\right)^i}\right)^{\frac{1}{e}}
		$$
		and
		$$\phi_{e,f}(g_2)=x \cdot \left(1+\sum_{j=1}^{\infty}\frac{a_{e-j}}{x^j}+\frac{c_0}{x^e}+\sum_{i=1}^{\infty}\frac{c_{-i}}{x^{e(i+1)} \cdot \left(1+\sum_{j=1}^{\infty}\frac{a_{e-j}}{x^j}\right)^i}\right)^{\frac{1}{e}}.
		$$
		Now let $i_0$ be the smallest nonnegative integer such that $b_{i_0} \neq c_{i_0}$ (recall that $g_1 \neq g_2$). Then we have that 
		$$\text{dist}(g_1,g_2)=\text{exp}(-i_0).
		$$
		Visibly, the two expressions 
		$$1+\sum_{j=1}^{\infty}\frac{a_{e-j}}{x^j}+\frac{b_0}{x^e}+\sum_{i=1}^{\infty}\frac{b_{-i}}{x^{e(i+1)} \cdot \left(1+\sum_{j=1}^{\infty}\frac{a_{e-j}}{x^j}\right)^i}
		$$
		and
		$$1+\sum_{j=1}^{\infty}\frac{a_{e-j}}{x^j}+\frac{c_0}{x^e}+\sum_{i=1}^{\infty}\frac{c_{-i}}{x^{e(i+1)} \cdot \left(1+\sum_{j=1}^{\infty}\frac{a_{e-j}}{x^j}\right)^i},
		$$
		agree up to order $e \cdot (i_0+1)$. Since $(-)^{\frac{1}{e}}$ is a filtration preserving isomorphism, we find that $\phi_{e,f}(g_1)$ and $\phi_{e,f}(g_2)$ will agree up to order $e \cdot (i_0+1)-1=e \cdot i_0+e-1$, that is larger than $i_0+1$ because $e\ge 2$ (the $-1$ is caused by multiplication by $x$ that shifts the expression down by $1$). 
		
		Therefore, we deduce that
		$$\text{dist}(\phi_{e,f}(g_1),\phi_{e,f}(g_2)) \leq \text{exp}(-i_0) \cdot \text{exp}(-1)=\text{exp}(-1) \cdot \text{dist}(g_1,g_2).
		$$
		This shows that the map $\phi_{e,f}$ is a contraction as desired. 
		\end{proof}
	\end{proposition}
We now derive the following corollary.
\begin{corollary} \label{cor: existence of B coordinates, formal}
Let $f$ be in $R\llbracket \frac{1}{x}\rrbracket[x]$ with a pole of order $e$ at infinity. Then there exists a unique $b$ in $\mathcal{B}$ such that
$$b \circ f=b^e.
$$
Furthermore $b$ can be written as the $\frac{1}{x}$-adic limit
$$b=\lim_{n \to \infty}(f^n)^{\frac{1}{e^n}}.
$$
\begin{proof}
Thanks to Proposition \ref{prop: phi is a contraction} we conclude via Theorem \ref{Thm: Banach} that the map $\phi_{e,f}$ must have precisely one fixed point $b$ in $\mathcal{B}$. The equation
$$\phi_{e,f}(b)=b,
$$
can be rewritten as
$$(b \circ f)^{\frac{1}{e}}=b.
$$
This equation certainly implies 
$$b \circ f=b^e,
$$
establishing the existence part of the desired statement. Now let $b_0$ be an element of $\mathcal B$ such that
$$b_0 \circ f=b_0^{e}.
$$
We can apply on both sides the map $(-)^{\frac{1}{e}}$. In view of Remark \ref{rmk: 1/e is a bijection}, this yields 
$$\phi_{e,f}(b_0)=b_0.
$$
Hence $b_0$ is a fixed point for $\phi_{e,f}$ and therefore $b_0=b$, in view of the uniqueness part of Theorem \ref{Thm: Banach} combined, once more, with Proposition \ref{prop: phi is a contraction}. 

Finally, taking as starting point of the metric space the element $x$ of $\mathcal{B}$, we see that
$$\phi_{e,f}^{n}(x)
$$
converges $\frac{1}{x}$-adically to the fixed point $b$, in view of Theorem \ref{Thm: Banach} combined with Proposition \ref{prop: phi is a contraction}. Since
$$\phi_{e,f}(f^{n}(x)^{\frac{1}{e^n}})=((f^{n}(f(x)))^{\frac{1}{e^n}})^{\frac{1}{e}}=(f^{n+1}(x))^{\frac{1}{e^{n+1}}},
$$
we obtain, by induction, that
$$\phi_{e,f}^{n}(x)=(f^n(x))^{\frac{1}{e^n}}
$$
and hence $b$ is the $\frac{1}{x}$-adic limit of 
$$\{(f^n(x))^{\frac{1}{e^n}}\}_{n \geq 1}. 
$$

\end{proof}
\end{corollary}
Suppose now furthermore that $R$ is an integral domain with fraction field $Q(R)$, and let 
$$\phi\coloneqq\frac{f_1(x)}{f_2(x)} \in Q(R)(x)
$$
be a rational function with the property that both $f_1(x)$ and $f_2(x)$ are monic polynomials in $R[x]$ such that 
$$\text{deg}(f_1)-\text{deg}(f_2)=e\ge 2.
$$
Observe that through geometric expansion we see that $\phi$ can be naturally viewed as an element of $R\llbracket \frac{1}{x}\rrbracket[x]$ having a pole of order $e$ (in the sense explained above). In particular Corollary \ref{cor: existence of B coordinates, formal} gives us a well defined element
$$b_{\phi} \in \mathcal{B},
$$
such that 
$$b_{\phi} \circ \phi=b_{\phi}^e.
$$
We now want to introduce the compositional inverse of $b_{\phi}$. Observe that the set $\mathcal{B}$ is a submonoid of $(R\llbracket \frac{1}{x}\rrbracket,\circ)$. 
\begin{proposition}\label{inverse_of_b}
The submonoid $\mathcal{B}$ is a subgroup. That is: every element of $\mathcal{B}$ admits a compositional inverse.
\begin{proof}
It suffices to show that every element of the submonoid $\mathcal{B}$ admits a left inverse. Let 
$$g_1=x+a_0+\sum_{i=1}^{\infty}\frac{a_i}{x^i}.
$$
We show inductively the existence of a left inverse $g_2=x+b_0+\sum_{i=1}^{\infty}\frac{b_i}{x^i}$, by showing for each $n$ in $\Z_{\geq 0}$ 
$$\left(x+\sum_{i=0}^{n}\frac{b_i}{x^i}\right) \circ g_1=x+O\left(\frac{1}{x^{n+1}} \right)=x+\frac{\lambda}{x^{n+1}}+O\left(\frac{1}{x^{n+2}} \right).
$$

For $i=0$, we put $b_0=-a_0$. We see that with this choice we have that $g_2 \circ g_1=x+O\left(\frac{1}{x} \right)$ as desired. Suppose we have found $b_0,\ldots,b_n$ that verify the claim; we now show that we can find some $b_{n+1}$ that extends the construction. We have that
$$\left(x+\sum_{i=0}^{n+1}\frac{b_i}{x^i}\right) \circ g_1=x+\frac{\lambda}{x^{n+1}}+\frac{b_{n+1}}{g_1(x)^{n+1}}+O\left(\frac{1}{x^{n+2}} \right).
$$
Using that $g_1(x)$ is in $\mathcal{B}$, we have that 
$$\frac{b_{n+1}}{g_1(x)^{n+1}}=\frac{b_{n+1}}{x^{n+1}}+O\left(\frac{1}{x^{n+2}} \right).
$$
All in all we have that 
$$\left(x+\sum_{i=0}^{n+1}\frac{b_i}{x^i}\right) \circ g_1=x+\frac{b_{n+1}+\lambda}{x^{n+1}}+O\left(\frac{1}{x^{n+2}} \right).
$$
Hence we see that $b_{n+1}=-\lambda$ gives the desired conclusion. 
\end{proof}
\end{proposition}
In this way for $\phi$ as above, we have a well defined $b_{\phi}^{-1}$ in $\mathcal{B}$. 
\subsection{Böttcher's coordinates as analytic objects} \label{subsection: Bottcher as analytic function}
The purpose of this section is to view $b_{\phi}$ at the end of Section \ref{subsection: Bottcher as formal} as an analytic function: ultimately we want to be able to evaluate it in some regions of a tree. 

Let $(K,|\cdot|)$ be a non-archimedean local field, i.e.\ a valued field which is either a finite extension of $\Q_p$ for some prime number $p$ (with the unique extension of the $p$-adic absolute value) or isomorphic to the fraction field of $\mathbb{F}_q\llbracket T\rrbracket$ (with the standard absolute value), i.e.\ $\mathbb{F}_q\llbracket T\rrbracket[T^{-1}]$, where $q=p^f$ for some positive integer $f$. Let $e$ be in $\Z_{\geq 2}$ an integer smaller than $p$. We fix once and for all a separable closure of $K$ and denote it by $K^{\text{sep}}$. We let $G_K\coloneqq\text{Gal}(K^{\text{sep}}/K)$. Recall that the absolute value $| \cdot |$ extends uniquely to $K^{\text{sep}}$. 

Let us now specialize the discussion of Section \ref{subsection: Bottcher as formal} to the case $R=\mathcal{O}_K$. Let $\phi\coloneqq\frac{f_1}{f_2}$ be a rational function in $K(x)$ written as the ratio of two monic polynomials $f_1,f_2$ in $\mathcal{O}_K[x]$ such that $\text{deg}(f_1)-\text{deg}(f_2)=e$. We call such a rational map a superattracting integral map of pole degree $e$. 

Observe that $b_{\phi}$ lives naturally in $x+\mathcal{O}_K\llbracket \frac{1}{x}\rrbracket$. As such it can be clearly evaluated in every element $\gamma$ of $K^{\text{sep}}$ with $|\gamma|>1$. Moreover, the functional equation
$$b_{\phi} \circ \phi=b_{\phi}^e
$$
can be as well evaluated in such elements of $K^{\text{sep}}$, yielding:
$$b_{\phi}(\phi(\gamma))=(b_{\phi}(\gamma))^e.
$$
Proposition \ref{inverse_of_b} implies that we can also evaluate the functional equation
$$b_{\phi} \circ b_{\phi}^{-1}=\text{id}=b_{\phi}^{-1} \circ b_{\phi}
$$
on every element $\gamma$ of $K^{\text{sep}}$ with $|\gamma|>1$. Observe that $b_{\phi}$ clearly commutes with the action of $G_K$, since this latter action is by isometries (by the uniqueness of the extension of $|\cdot|$ to $K^{\text{sep}}$). All in all, the map $b_\phi$ defines a $G_K$-equivariant bijection
$$b_{\phi}\colon \{\gamma \in K^{\text{sep}}: |\gamma|>1 \} \righttoleftarrow.
$$
One can upgrade as well the functional equation
$$b_{\phi}=\lim_{n \to \infty}(\phi^n)^{\frac{1}{e^n}}
$$
into an equation of analytic function and the application of Theorem \ref{Thm: Banach} can be used to give finer convergence properties than those given in Theorem \ref{cor: existence of B coordinates, formal}. As we shall not need this here, we will not give further details and refer the interested reader to \cite{ingram1} for the polynomial case. 
\subsection{Root extraction for non-constant basepoints} \label{subsection: root extraction}
We begin with an elementary fact that will be used to recognize arboreal subrepresentations later in this section. 
\begin{proposition} \label{prop: criterion to be a subtree}
	Let $e \leq d$ be positive integers. Let $\mathcal{G}$ be a subgraph of an infinite $d$-regular rooted tree $T$ with the following three properties:
	\begin{itemize}
		\item the root of $T$ is a vertex of $\mathcal{G}$;
		\item if a vertex is in $\mathcal{G}$ then so is its unique ascendant, and they are linked in $\mathcal{G}$;
		\item every vertex of $\mathcal{G}$ has precisely $e$ descendants in $\mathcal{G}$.
	\end{itemize}
	Then $\mathcal{G}$ is isomorphic to an infinite $e$-regular tree rooted at the same root of $T$.
	\begin{proof}
		We proceed by induction on the level of the tree. Clearly, by the first property we have that the part of $\mathcal{G}$ of level $0$ consists of the root of $T$. Now suppose that for a nonnegative integer $N$, we know that the part of $\mathcal{G}$ consisting of vertices at distance at most $N$ from the root, is the truncation at level $N$ of an $e$-regular tree rooted at the same root of $T$. Thanks to the second property we know that if a vertex is present at level $N+1$ then it must be linked in $\mathcal{G}$ to its unique ascendant at level $N$. And by the third property we know that each vertex at level $N$ has precisely $e$ descendants in $\mathcal{G}$. Hence the part of $\mathcal{G}$ at level $N+1$ consists of a choice of $e$ descendants for each of the elements of $\mathcal{G}$ at level $N$. Therefore $\mathcal{G}$ at level $N+1$ is the truncation at level $N+1$ of a $e$-regular tree rooted at the root of $T$. 
		
		This completes the induction and shows that this property holds at all levels, which is readily seen to be equivalent to the desired conclusion. 
	\end{proof}
\end{proposition}

We recall the notation of Section \ref{subsection: Bottcher as analytic function}, that we will use in the rest of the section: $(K, |\cdot|)$ is a non-archimedean local field, $e\ge 2$ is an integer that is invertible in $K$, and $\phi\in K(x)$ is a rational function that can be written as $f_1/f_2$ with $f_1,f_2\in \O_K[x]$ and monic, and such that $\deg f_1-\deg f_2=e$.

Thanks to the following proposition we are able to produce, for base points $\alpha$ with $|\alpha|>1$, a natural Galois invariant subtree of degree $e$ inside $T_\infty(\phi,\alpha)$. Notice that this is not obvious, as $T_\infty(\phi,\alpha)$ has degree $\text{deg}(\phi)$, which is different from $e$ precisely if $\phi$ is not a polynomial. Moreover, we show that this tree is isomorphic, as a $G_K$-set, to a tree of preimages of the map $x^e$.
\begin{proposition} \label{prop: canonica degree e subtree}
	Assume the above notation. Let $\alpha$ be in $K$ with $|\alpha|>1$.
	\begin{enumerate}
		\item Let $\gamma\in K$ be such that $|\gamma|>1$. The map $b_\phi$ induces an isomorphism of $G_K$-sets
		$$\{\beta\in K^{\text{sep}}\colon |\beta|>1,\,\, \phi(\beta)=\gamma\}\to \{\beta\in K^{\text{sep}}\colon \beta^e=b_\phi(\gamma)\}.$$
		\item The subset of vertices
		$$T_{\infty}^e(\phi,\alpha)\coloneqq T_{\infty}(\phi,\alpha) \cap \{\beta \in K^{\textup{sep}}\colon|\beta|>1\},
		$$
		with the induced graph structure is a regular tree of degree $e$ rooted at $\alpha$. Furthermore, such tree is preserved by $G_K$.
		\item $b_{\phi}$ induces an isomorphism of $G_K$-trees
		$$T_{\infty}^e(\phi,\alpha) \to T_{\infty}(x^e,b_\phi(\alpha))
		$$
		and yields an equality of fields
		$$K(T_{\infty}^e(\phi,\alpha))=K(T_{\infty}(x^e,b_{\phi}(\alpha))).
		$$
	\end{enumerate} 
	\begin{proof}
		$(1)$ Let $\beta\in K^{\text{sep}}$ be such that $|\beta|>1$ and $\phi(\beta)=\gamma$. Then $b_\phi(\phi(\beta))=b_\phi(\gamma)$, so that $(b_\phi(\beta))^e=b_\phi(\gamma)$ (note that evaluating $b_\phi$ at $\beta$ makes sense because $|\beta|>1$). On the other hand if $\beta^e=b_\phi(\gamma)$ then $b_\phi^{-1}(\beta^e)=\gamma$, since $b_\phi$ is a bijection on the set of elements of absolute value greater than $1$. For the same reason, $\beta=b_\phi(\beta')$ for some $\beta'\in K^{\text{sep}}$ with $|\beta'|>1$. It follows that $\phi(\beta')=\gamma$. Since both $b_\phi$ and $b_\phi^{-1}$ are injective, we have found injective maps in opposite directions between finite sets; this means that such maps are bijections. The fact that $b_\phi$ is $G_K$-equivariant follows immediately from the fact that $G_K$ acts isometrically on $K^{\text{sep}}$.
		
		$(2)$ First of all, by hypothesis $\alpha\in T^e_\infty(\phi,\alpha)$. Next, suppose that $\gamma\in T^e_\infty(\phi,\alpha)$. Then in particular $|\gamma|>1$, and it is immediate to see that $|\phi(\gamma)|>1$ as well. Therefore $\phi(\gamma)$, that is the unique ascendant of $\gamma$ in $T_\infty(\phi,\alpha)$, belongs to $T^e_\infty(\phi,\alpha)$. Finally, let $\gamma\in T^e_\infty(\phi,\alpha)$. By point $(1)$, the set of $\beta\in K^{\text{sep}}$ with $|\beta|>1$ and such that $\phi(\beta)=\gamma$ has precisely $e$ elements, since $e$ is invertible in $K$. It follows that $\gamma$ has exactly $e$ descendants. By Proposition \ref{prop: criterion to be a subtree}, it follows that $T_\infty^e(\phi,\alpha)$ is a regular rooted tree of degree $e$.
		
		$(3)$ By point $(1)$ we get that $b_\phi$ induces a level-preserving $G_K$-equivariant bijection $T_\infty^e(\phi,\alpha)\to T_\infty(x^e,b_\phi(\alpha))$. Since $b_\phi\circ \phi=b_\phi^e$ on $T_\infty^e(\phi,\alpha)$, such bijection is in fact an isomorphism of $G_K$-trees. Finally, if $\gamma \in T_\infty^e(\phi,\alpha)$ then $b_\phi(\gamma)\in K(\gamma)$, because $K(\gamma)$ is complete and $b_\phi(\gamma)$ is given by a convergent series of elements of $K(\gamma)$. Therefore, $K(T_{\infty}^e(\phi,\alpha))\supseteq K(T_{\infty}(x^e,b_{\phi}(\alpha)))$. The reverse inclusion is proven by the same argument on the map $b_\phi^{-1}$.
	\end{proof}
\end{proposition}

Proposition \ref{prop: canonica degree e subtree} immediately implies the following corollary. 
\begin{corollary}\label{basepoint_is_constant}
	Let $\phi$ be a superattracting integral map of pole degree $e$ defined over $K$. Let $\alpha$ be in $K$ with $|\alpha|>1$. Then the Galois group
	$$\textup{Gal}(K_{\infty}(\phi,\alpha)/K)
	$$
	is non-abelian. 
\end{corollary}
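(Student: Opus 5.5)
By Proposition \ref{prop: canonica degree e subtree}(3) we have $K(T_\infty^e(\phi,\alpha))=K(T_\infty(x^e,b_\phi(\alpha)))$. This field is a subfield of $K_\infty(\phi,\alpha)$, since every vertex of $T_\infty^e(\phi,\alpha)$ is a preimage of $\alpha$. A Galois group with a non-abelian quotient is itself non-abelian. So the plan is to show that the Kummer-type field $K(T_\infty(x^e,\beta))/K$ has non-abelian Galois group, where $\beta\coloneqq b_\phi(\alpha)$. Note first that $b_\phi(\alpha)\in K$ and $|\beta|=|\alpha|>1$, because $b_\phi\in x+\O_K\llbracket \frac1x\rrbracket$. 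The field $K(T_\infty(x^e,\beta))$ is Galois over $K$, being generated by a $G_K$-stable set, and $e$ is invertible in $K$, so it is generated over $K$ by all $e^n$-th roots of $\beta$ for all $n$.

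Next I reduce to a prime. Choose a prime $\ell\mid e$. Since $e<p$ we have $\ell\neq p$, and the $\ell^n$-th roots of $\beta$ lie in $K(T_\infty(x^e,\beta))$, because each is an $(e/\ell)^n$-th power of an $e^n$-th root. So it suffices to show that $L\coloneqq K(\mu_{\ell^\infty},\beta^{1/\ell^\infty})$ has non-abelian Galois group over $K$. The roots of unity $\mu_{\ell^n}$ are contained in it, as ratios of $\ell^n$-th roots of $\beta$.

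Two facts give non-commutativity. First, $K(\mu_{\ell^\infty})/K$ is an infinite extension, so the cyclotomic character $\chi\colon \mathrm{Gal}(L/K)\to\Z_\ell^\times$ has infinite image. This is because the residue field is finite: Frobenius acts on $\mu_{\ell^\infty}$ through $q\in\Z_\ell^\times$, which has infinite order. Second, $\beta$ is not an $\ell$-power up to roots of unity in a strong sense. Its valuation $v(\beta)<0$ is a fixed nonzero integer, so $\beta\notin (K')^{\times \ell^n}$ for every finite extension $K'$ with ramification index less than $\ell^n/|v(\beta)|$. Consequently $[K(\mu_{\ell^\infty})(\beta^{1/\ell^n}):K(\mu_{\ell^\infty})]$ is unbounded, since $K(\mu_{\ell^\infty})/K$ is unramified, $\ell\neq p$.

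Hence the Kummer part $H\coloneqq\mathrm{Gal}(L/K(\mu_{\ell^\infty}))$ embeds into $\Z_\ell(1)$ with nontrivial (indeed infinite) image. Conjugation by $\sigma\in\mathrm{Gal}(L/K)$ acts on $H$ by $\tau\mapsto\tau^{\chi(\sigma)}$, because $\sigma\tau\sigma^{-1}(\beta^{1/\ell^n})/\beta^{1/\ell^n}=\sigma(\zeta)$ when $\tau(\beta^{1/\ell^n})=\zeta\beta^{1/\ell^n}$. Pick $\sigma$ with $\chi(\sigma)\neq1$ (a Frobenius lift with $q\ne 1$ in $\Z_\ell^\times$). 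Pick $\tau\in H$ of infinite order, or at least of order not dividing $\chi(\sigma)-1$; this is possible since the image of $H$ is an open, hence infinite, subgroup of $\Z_\ell$. Then $\sigma\tau\sigma^{-1}=\tau^{\chi(\sigma)}\ne\tau$, so the group is non-abelian.

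The main obstacle is the second fact: guaranteeing that the Kummer image is infinite rather than merely nontrivial, so that it cannot be killed by the finite order of $\chi(\sigma)-1$. I would handle it with the valuation argument above, using that adjoining $\ell$-power roots of unity is unramified while $\beta^{1/\ell^n}$ forces ramification index growing like $\ell^n/\gcd(\ell^n,v(\beta))$.
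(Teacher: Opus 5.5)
Your proof is correct. After the common first step, where you pass via Proposition \ref{prop: canonica degree e subtree} to the Kummer tower of $\beta=b_\phi(\alpha)$, you proceed differently from the paper's main argument.

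The paper concludes in one line with local class field theory. The tower has unbounded prime-to-$p$ ramification, whereas the tame part of inertia in any abelian extension of $K$ is bounded: that inertia is a quotient of $\O_K^\times$, whose maximal prime-to-$p$ quotient is $\F_q^\times$. As an alternative, the paper cites Lang's theorem on the Galois group of $x^n-a$.

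You use the same ramification input, but instead of reciprocity you exhibit non-commutativity directly in the affine group $\Z_\ell(1)\rtimes\Z_\ell^\times$. Conjugation by a Frobenius lift multiplies the Kummer coordinate by $q\neq 1$. This is essentially a self-contained version of the paper's second route. It costs a few more lines but needs only Kummer theory and the action of Frobenius on $\mu_{\ell^\infty}$. It also yields slightly more: the commutator subgroup is infinite, since it contains $(q-1)$ times an open subgroup of $\Z_\ell(1)$.

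One simplification: the ``main obstacle'' you single out is not really there. $H$ embeds in the torsion-free group $\Z_\ell(1)$, and $\chi(\sigma)-1$ is a nonzero element of the domain $\Z_\ell$. Hence $\sigma\tau\sigma^{-1}\neq\tau$ for every $\tau\neq 1$ in $H$, and you only need $H\neq 1$. That amounts to $\beta\notin K(\mu_{\ell^\infty})^{\times\ell^n}$ for a single $n$. This is immediate from $v(\beta)\neq 0$ and the unramifiedness of $K(\mu_{\ell^\infty})/K$, taking any $n$ with $\ell^n\nmid v(\beta)$. Also, a nontrivial closed subgroup of $\Z_\ell$ is automatically open, so ``nontrivial'' and ``infinite'' coincide here.
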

\begin{proof}
	By Proposition \ref{prop: canonica degree e subtree}, it is enough to show that $K(T_\infty(x^e,b_\phi(\alpha)))/K$ is non-abelian. This can be viewed in (at least) two different ways. First, $K(T_\infty(x^e,b_\phi(\alpha)))/K$ has unbounded prime-to-$p$ ramification, while the tame inertia in an abelian local extension is bounded (see also \cite[Lemma 3.11]{ferpag1}. Secondly, in a more direct way, one can infer the non-abelianity of $K(T_\infty(x^e,b_\phi(\alpha)))/K$ from \cite[Theorem VI.9.4]{lang}.
\end{proof}

\section{PCF implies isotrivial} \label{Sct: Pcf impl isotriv}
For the rest of this section $d \geq e$ will be two integers in $\Z_{\geq 2}$. For a field $K$, we denote by 
$$X_{d,e}(K),
$$
the set of rational maps $f$ in $K(t)$, which satisfies the following
$$f=\frac{f_1(t)}{f_2(t)},
$$
where $f_1,f_2$ are monic coprime polynomial in $K[t]$ of degree respectively $d$ and $d-e$ and with $f_1(0)=0$. We define
$$X_{d,e}^{\textup{PCF}}(K),
$$
to be the subset of $X_{d,e}(K)$ consisting of PCF maps. The main result of this section is the following. Informally it states, over fields of characteristic $0$ or larger than $d$, it is not possible to find $1$-parameter families of PCF maps of degree $d$ with infinity superattracting of order $e$. For a proof see \cite[Section 3]{ingram2}, taking into account that our normalized form in $X_{d,e}$ becomes his upon conjugating by $z \mapsto \frac{1}{z}$. 
\begin{theorem} \label{thm: Thurston rig. fct field version}
Let $d,e$ be as above. Let $k$ be a field such that $\textup{char}(k)=0$ or $\textup{char}(k)>d$. Let $K$ be a field of transcendence degree $1$ over $k$, and suppose that $k$ is algebraically closed in $K$. Then
$$X_{d,e}^{\textup{PCF}}(K)=X_{d,e}^{\textup{PCF}}(k). 
$$
\end{theorem}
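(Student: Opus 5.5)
The plan is to prove $X_{d,e}^{\textup{PCF}}(K)\subseteq X_{d,e}^{\textup{PCF}}(k)$ by contradiction, since the reverse inclusion is obvious. Suppose $f\in X_{d,e}^{\textup{PCF}}(K)$ has some coefficient not in $k$. After passing to a finite extension of $K$, which keeps $k$ algebraically closed in it once $k$ is replaced by its algebraic closure, we view $f$ as a one-parameter family $f_t$ over a smooth projective curve $C/k$. The normalization in $X_{d,e}$ (monic, $f_1(0)=0$, $\infty$ superattracting of order $e$) kills the affine conjugacies fixing $\infty$. Hence a non-constant point of $X_{d,e}(K)$ is a genuinely non-isotrivial family, except for a finite residual ambiguity from roots of unity scaling, which can be handled by a further finite extension.

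The key tool will be a place $v$ of $K$ at which some coefficient of $f$ has a pole. Such a place exists because $f$ is non-constant and $k$ is algebraically closed in $K$. We analyse $f$ over the completion $K_v$, a non-archimedean field with residue field $k$. The goal is to show that some critical point is not preperiodic, using a height or escape-rate argument localized at $v$.

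\textbf{Escape rate.} The Böttcher coordinate of Section \ref{subsection: Bottcher as formal} at $\infty$ exists because $e<p$ (or $\textup{char}=0$). It provides a local Green function
$$G_v(z)=\lim_n e^{-n}\log^+|f^n(z)|_v$$
near $\infty$. This function extends to a $v$-adic escape rate, and it is positive exactly on the $v$-adic basin of $\infty$.

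\textbf{A critical point escapes.} I will show that if the coefficients have a pole at $v$, then some critical point $c$ satisfies $|f^n(c)|_v\to\infty$. The plan is to rescale: conjugate by $z\mapsto \lambda z$, where $|\lambda|_v=\max|a_i|^{1/(\cdot)}$ is chosen so that the rescaled map has $v$-integral coefficients with a non-degenerate reduction $\bar f$. Then I compare the critical points of $\bar f$ with the orbit of the reduced superattracting point. Since $\textup{char}(k)>d$, critical points reduce to critical points of $\bar f$ with the correct multiplicities, so Riemann--Hurwitz-type counting applies. The reduced map $\bar f$ has degree $<d$ or a collapsed structure, and the lost critical points lie in the basin of the rescaled $\infty$. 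This is exactly the Newton-polygon-style argument in \cite[Section 3]{ingram2}.

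\textbf{Conclusion.} An escaping critical point has $G_v(c)>0$, hence infinite forward orbit, so $f$ is not PCF. This contradiction finishes the proof.

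The main obstacle is the last local step: showing that degeneration at $v$ forces some critical point into the $v$-adic basin of $\infty$, rather than, for example, all critical points remaining bounded while the map degenerates. This is where $\textup{char}(k)>d$ is used, since the families $x^p+tx$ show the claim fails otherwise. I would first try the critical-height comparison: prove that the sum of escape rates $G_v(c)$ over critical points is comparable to $\log\max|a_i|_v$, following Ingram's height inequality for critical heights. Such an inequality forces positive critical escape whenever a coefficient has a pole.
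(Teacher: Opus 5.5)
\paragraph{Verdict.} The paper gives no argument of its own for this statement. It imports it from \cite[Section 3]{ingram2} after conjugating by $z\mapsto 1/z$. Your global frame is fine: a coefficient outside $k$ is transcendental over $k$, so it has a pole at some place $v$, and an element of $K$ with no poles lies in $k$. However, there is a genuine gap exactly at the step you flag, and the final deduction is wrong as stated.

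\paragraph{The local step does not extend from polynomials.} Your rescaling sketch is the polynomial argument. For $e=d$ it works: in characteristic $>d$, the reduction at the scale of the largest critical point is a monic degree-$d$ polynomial fixing $0$, and its critical points are the reductions of those of $f$. Since one of these is nonzero, it cannot have $0$ as its only finite critical value, so some critical value is large. For $e<d$ this breaks down. Zeros and poles of $f$ can coalesce at that scale, the reduction loses degree, and the correspondence between critical points of $f$ and of $\bar f$, which is a good-reduction fact, is no longer available.

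For example, take $f=z^e(z-a)/(z-b)$ with $|a|_v=|b|_v=R>1$ and $|a-b|_v=\delta<R$. At scale $R$ it reduces to $w^e$. Its two free critical points lie in the collapsed residue class of $a$, at distance $\sqrt{R\delta}$ from $a$, and they escape with $|f(c)|_v=R^e$. This is visible only after a further Newton-polygon analysis inside that class. That analysis uses the critical polynomial $f_1'f_2-f_1f_2'$, and it is where the full hypothesis $\mathrm{char}(k)>d$, not merely $e<p$, enters. Your sentence that ``the lost critical points lie in the basin'' is precisely this content. Deferring it to ``Ingram's height inequality'' returns the entire theorem to the citation, which is all the paper does as well.

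\paragraph{Escaping does not imply an infinite orbit.} The deduction ``escaping $\Rightarrow$ infinite forward orbit'' fails when $e<d$. Critical points in $f^{-\infty}(\infty)$ escape but are preperiodic, and a multiple pole of $f$ is such a critical point. In $f=z^4/(z-b)^2\in X_{4,2}$ with $|b|_v>1$, the critical point $b$ escapes trivially. The genuine witness of non-PCF behaviour is the free critical point $2b$: we have $|f(2b)|_v=|16b^2|_v=|b|_v^2$, and $|f(z)|_v=|z|_v^2$ for $|z|_v>|b|_v$.

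For the same reason, an inequality $\sum_c G_v(c)\gg\log\max_i|a_i|_v$ taken over all critical points is vacuous as soon as one of them is a prepole, since there $G_v=+\infty$. What you actually need is this: if some coefficient is not $v$-integral, then some critical point \emph{outside} $f^{-\infty}(\infty)$ lies in the $v$-adic basin of $\infty$, i.e.\ has $0<G_v(c)<\infty$. The PCF hypothesis forbids this at every place.

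\paragraph{Minor point.} The Böttcher coordinate of Section~\ref{subsection: Bottcher as analytic function} is constructed for integral $f_1,f_2$, which fails at $v$. You do not need it. $G_v$ is defined directly, because $|f(z)|_v=|z|_v^e$ whenever $|z|_v$ exceeds both $1$ and the absolute values of all zeros and poles of $f$.
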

We have the following trivial consequence. 
\begin{corollary}\label{isotrivial_conjugacy}
Let $d,e$ be as above. Let $k$ be a field such that $\textup{char}(k)=0$ or $\textup{char}(k)>d$. Let $K$ be a field of transcendence degree $1$ over $k$, and suppose that $k$ is algebraically closed in $K$. Let $f$ be in $K(t)$ a PCF map of degree $d$ with a fixed point of order $e$. Then there exists a finite extension $L/K$ and an element $\gamma$ in $\textup{PGL}_2(L)$ such that 
$$\gamma \circ f \circ \gamma^{-1}
$$
is in $k(t)$. 
\end{corollary}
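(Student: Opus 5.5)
The plan is to conjugate $f$ into the normal form of $X_{d,e}$ over a finite extension and then apply Theorem \ref{thm: Thurston rig. fct field version}. Let $z_0\in\overline{K}$ be the fixed point of $f$ of order $e$, that is, the superattracting fixed point with local degree $e\ge 2$.

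First we normalize $f$. Pass to a finite extension $L_1/K$ containing $z_0$ and choose a Möbius transformation $\gamma_1\in \textup{PGL}_2(L_1)$ sending $z_0$ to $\infty$. Then $g=\gamma_1\circ f\circ\gamma_1^{-1}$ fixes $\infty$ with local degree $e$. Write $g=g_1/g_2$ with $g_1,g_2$ coprime, $\deg g_1=d$ and $\deg g_2=d-e$. Composing with a translation $t\mapsto t-c$, where $c$ is a root of $g_1(t)=0$, we may assume $g_1(0)=0$; a root exists after a further finite extension, and the translation keeps $\infty$ fixed with the same local degree. We may also make $g_2$ monic by absorbing scalars. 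It remains to make the leading coefficient $a$ of $g_1$ equal to $1$. Scaling by $t\mapsto \lambda t$ changes the leading coefficient to $a\lambda^{1-e}$, since the quotient behaves like $a t^{e}$ at $\infty$. We therefore choose $\lambda$ with $\lambda^{e-1}=a$, which is possible in a finite extension $L$ (separable, since $e-1<p$ when $\textup{char}(k)>d$). This scaling preserves $g_1(0)=0$ and the monicity of $g_2$, so the resulting map $h=\gamma\circ f\circ\gamma^{-1}$ lies in $X_{d,e}(L)$.

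Next we observe that being PCF is a conjugation invariant, because critical points and their orbits are transported by $\gamma$. Hence $h\in X^{\textup{PCF}}_{d,e}(L)$.

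Finally we apply Theorem \ref{thm: Thurston rig. fct field version}, which needs $L$ to have transcendence degree $1$ over a field in which that field is algebraically closed. Let $k'$ be the algebraic closure of $k$ in $L$; it is a finite extension of $k$, has the same characteristic, and $L$ still has transcendence degree $1$ over $k'$. The theorem gives $h\in X_{d,e}(k')$.

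This is the one delicate point. If the intended conclusion requires $h\in k(t)$ literally rather than $k'(t)$, I would handle it by taking $k$ algebraically closed in the application, or by noting that the theorem applied over $\overline{k}L$ shows the coefficients are constant, hence algebraic over $k$. I would also check that the extensions taken in the normalization are harmless, since the statement allows a finite extension $L/K$.
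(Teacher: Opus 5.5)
You follow the natural route: conjugate $f$ into the normal form of $X_{d,e}$ over a finite extension, then apply Theorem~\ref{thm: Thurston rig. fct field version}. This is presumably what the paper means by ``trivial consequence''; it gives no further argument. However, your translation step fails as written. For coprime $h_1,h_2$, the condition $h_1(0)=0$ means $h(0)=0$, so $0$ must be a \emph{fixed point} of the normalized map. Conjugating by $t\mapsto t-c$ with $g_1(c)=0$ gives $h(t)=g(t+c)-c$. Its numerator at $t=0$ is $g_1(c)-c\,g_2(c)=-c\,g_2(c)$, which is nonzero unless $c=0$, since $g_2(c)\neq 0$ by coprimality. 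In other words, the zero $c$ moves to $0$, but its image $0$ moves to $-c$. Take instead $c$ to be a root of $g_1(t)-t\,g_2(t)$, which has degree $d$ because $e\ge 2$. Then $g_1(t+c)-c\,g_2(t+c)$ vanishes at $0$, still has degree $d$ and leading coefficient $a$, and your scaling step goes through unchanged.

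The point you flag at the end is a genuine gap, and neither suggested fix closes it. Working over $k'$ or over $\overline{k}L$ only gives $\gamma\circ f\circ\gamma^{-1}\in\overline{k}(t)$, and assuming $k$ algebraically closed changes the hypotheses. Descending to $k(t)$ is a field-of-moduli problem. When $K/k$ is regular (e.g.\ $k$ perfect), $\textup{Aut}(\overline{K}/K)$ surjects onto $\textup{Aut}(\overline{k}/k)$. Since $f\in K(t)$, and two maps in $\overline{k}(t)$ that are conjugate over $\overline{K}$ are already conjugate over $\overline{k}$, the $\overline{k}$-conjugacy class of $h$ is Galois-stable. You then still need an actual model over $k$. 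For $k$ finite this follows from Lang's theorem for $\textup{PGL}_2$: if $h^{\mathrm{Frob}}=\phi h\phi^{-1}$, write $\phi=(\psi^{\mathrm{Frob}})^{-1}\psi$; then $\psi h\psi^{-1}$ is Frobenius-invariant, hence lies in $k(t)$. For general $k$, however, a field of moduli need not be a field of definition, so this is not a formal consequence of the theorem. Note that the only use of the corollary, in Section~\ref{Section: Proof}, needs just $g\in\overline{\F}_p(x)$. That is exactly what your argument proves once the translation is fixed.
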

We conclude mentioning the following consequence, which follows rapidly from the above theorems, as any positive dimensional variety needs to contain a non-empty open of some curve. 
\begin{theorem} \label{thm: Thurston's rigidity}
Let $d,e$ be as above. Let $k$ be a field such that $\textup{char}(k)=0$ or $p\coloneqq \textup{char}(k)>d$. Then
$$X_{d,e}^{\textup{PCF}}(k)=X_{d,e}^{\textup{PCF}}(k\cap \widebar{\Q})
$$
in the first case and 
$$X_{d,e}^{\textup{PCF}}(k)=X_{d,e}^{\textup{PCF}}(k\cap \widebar{\mathbb{F}}_p)
$$
in the second case. 

Furthermore, for each positive integer $N$, there are only finitely many elements in $X_{d,e}^{\textup{PCF}}(k)$ with all of the critical orbits having length bounded by $N$. 
\end{theorem}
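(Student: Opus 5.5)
We need to prove Theorem \ref{thm: Thurston's rigidity}. It has two parts: descent of PCF maps to the algebraic closure of the prime field, and finiteness for bounded critical orbit lengths.

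\emph{Descent.} Let $k_0$ denote $k\cap\overline{\Q}$ or $k\cap\overline{\F}_p$, according to the characteristic. A map $f\in X_{d,e}(k)$ is determined by finitely many coefficients. Let $k_1=k_0(\text{coefficients of } f)$, a finitely generated extension of $k_0$ with $k_0$ algebraically closed in $k_1$, and argue by induction on $\operatorname{trdeg}(k_1/k_0)$. If the transcendence degree is $0$, then $k_1=k_0$ and there is nothing to prove. Otherwise, choose an intermediate field $k_0\subseteq k'\subset k_1$ with $k_1/k'$ of transcendence degree $1$. Replace $k'$ by its algebraic closure in $k_1$; then $k'$ is algebraically closed in $k_1$ and has transcendence degree one less than $k_1$ over $k_0$. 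Its characteristic is still $0$ or $>d$. Theorem \ref{thm: Thurston rig. fct field version} applied to $k'\subset k_1$ gives $f\in X_{d,e}^{\mathrm{PCF}}(k')$. Since $k_0$ is algebraically closed in $k'$, induction yields $f\in X_{d,e}^{\mathrm{PCF}}(k_0)$.

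\emph{Finiteness.} Fix $N$. Parametrize $X_{d,e}$ by the affine coordinates of $f_1,f_2$, taking the open subset where the resultant is nonzero. The condition that every critical orbit has length at most $N$ is algebraic, as follows.
\begin{enumerate}
\item Work on the finite cover $\tilde X$ of $X_{d,e}$ obtained by adjoining the critical points. These are the roots of $f_1'f_2-f_1f_2'$, together with $\infty$, which is fixed and harmless.
\item For each critical point $c$, impose $f^{i}(c)=f^{j}(c)$ for some $0\le i<j\le N$, written projectively so that it is a polynomial condition.
\end{enumerate}
This defines a Zariski closed subset $Z_N\subseteq X_{d,e}$ over the prime field, namely the image of a finite union of closed subsets of $\tilde X$ under a finite map. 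It suffices to show $\dim Z_N=0$, since then $Z_N(\bar k)$ is finite and hence so is $Z_N(k)$.

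Suppose some component $W$ of $Z_N$ had positive dimension. Then $W$ contains an irreducible curve $C$, defined over $\overline{\mathbb{F}}_p$ or $\overline{\Q}$ and nonconstant in $X_{d,e}$. Its generic point gives a map $f\in X_{d,e}^{\mathrm{PCF}}(K)$ with $K=\bar k_0(C)$, of transcendence degree $1$ over the algebraically closed field $\bar k_0$. Theorem \ref{thm: Thurston rig. fct field version} forces $f\in X_{d,e}(\bar k_0)$, contradicting that $C\to X_{d,e}$ is nonconstant.

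The main obstacle is the finiteness part, specifically making precise that bounded orbit length cuts out a closed subvariety. Critical points are only defined on a cover, and collisions of critical points or with $\infty$ need care. Handling this with the finite cover $\tilde X$ and projective equations, then taking images under the finite map, should suffice.
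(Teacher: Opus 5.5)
Your proposal is correct and is essentially the paper's argument: the paper only remarks that the statement follows because a positive-dimensional variety contains an open piece of a curve, so one can apply Theorem \ref{thm: Thurston rig. fct field version} over $\bar k_0(C)$. Your finiteness step is exactly that; the one small difference is that you prove descent by induction along a tower of transcendence-degree-one extensions, whereas the paper's remark would read descent off directly from the zero-dimensionality of the loci $Z_N$, which are defined over the prime field, since $X_{d,e}^{\mathrm{PCF}}(k)=\bigcup_N Z_N(k)$.
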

\begin{remark} \label{Remark: comparison with Levy}
Theorem \ref{thm: Thurston's rigidity} has been announced with a different proof as well in unpublished work of Levy. 

At present, we are not able to verify the proof given in that work. In particular, it uses two times and in a crucial way that the coordinate ring of the space of rational maps as in $X_{d,e}$ equipped with an ordered list of critical points and critical values is integral over the polynomial algebra of critical values. The only evidence provided in Levy for this integrality is given by the fact that the resulting map on $\text{Spec}_{\text{max}}$ has finite fibers. Unfortunately that is not sufficient, as one can see for example with $k[x,\frac{1}{x-1}]$ as $k[x^2]$-algebra where $\textup{char}(k)\neq 2$, where the map is even surjective and with finite fibers, but the former ring is not integral on the latter. We believe it would be interesting to fill this hole in the literature and provide a proof of this integrality claim. 
\end{remark}

\section{Proof of Theorem \ref{main_thm}} \label{Section: Proof}

We recall that $F$ is a global function field of characteristic $p$, $f\in F(x)$ is a rational function of degree $d\ge 2$ such that $p>d$ and having a superattracting fixed point, and $\alpha\in F$ is non-exceptional for $f$. We begin by proving the following claim:

\vspace{3mm}

\begin{center}
$(\heartsuit)$ Let $(g,\beta)$ be a constant pair (i.e.\ a pair such that $g\in \overline{\F}_p(x)$ and $\beta\in \overline{\F}_p$) and suppose that $(f,\alpha)$ and $(g,\beta)$ are conjugate over some extension $L/F$. Then $G_\infty(f,\alpha)$ is abelian if and only if $(f,\alpha)$ and $(g,\beta)$ are conjugate over $F^{\ab}$.
\end{center}

\vspace{3mm}

Let $m\in \text{PGL}_2(L)$ be such that $m^{-1}\circ f\circ m=g$ and $m^{-1}(\alpha)=\beta$. For every $n\ge 0$, the map $m$ induces a bijection $g^{-n}(\beta)\to f^{-n}(\alpha)$ and, in turn, a bijection $g^{-\infty}(\beta)\to f^{-\infty}(\alpha)$. If $L\subseteq F^{\ab}$ then the fact that $f^{-\infty}(\alpha)\subseteq F^{\ab}$ follows immediately from the surjectivity of $m$ together with the fact that $g^{-\infty}(\beta)\subseteq \overline{\F}_p$. Conversely, suppose that $G_\infty(f,\alpha)$ is abelian and let $\gamma_1,\gamma_2,\gamma_3$ be three pairwise distinct points in $g^{-\infty}(\beta)$, which surely exist since $\beta$ is not exceptional for $g$. Then $m(\gamma_i)\in F^{\text{ab}}$ for every $i\in \{1,2,3\}$. On the other hand also $\gamma_i\in \overline{\F}_p\subseteq F^{\text{ab}}$ for every $i\in \{1,2,3\}$. The map $m$ is uniquely determined by its action on three points, and hence it needs to be defined over $F^{\text{ab}}$.

Claim $(\heartsuit)$ implies immediately one direction of Theorem \ref{main_thm}: if $(f,\alpha)$ is $F^{\text{ab}}$-conjugate to a constant pair, then $G_\infty(f,\alpha)$ is abelian.

Conversely, suppose that $G_\infty(f,\alpha)$ is abelian. By Theorem \ref{abelian_implies_fr} the extension $F_\infty^{\text{ab}}(f,\alpha)/F$ ramifies only at finitely many places, but saying that $G_\infty(f,\alpha)$ is abelian is equivalent to saying that $F_\infty^{\text{ab}}(f,\alpha)=F_\infty(f,\alpha)$. Hence by Theorem \ref{finite_ramification_implies_pcf} it follows that $f$ is PCF. Since $f$ has a superattracting fixed point, it follows by Corollary \ref{isotrivial_conjugacy} that there exists a finite extension $L/F$ such that $(f,\alpha)$ is $L$-conjugate to $(g,\beta)$ where $g\in \overline{\F}_p(x)$ has a pole of degree $e\ge 2$ at $\infty$ and $\beta\in L$. On the other hand since $\gal(F_\infty(f,\alpha)/L)$ is a subgroup of $G_\infty(f,\alpha)$, it is itself abelian, and because $(f,\alpha)$ and $(g,\beta)$ are $L$-conjugate it follows that $G_\infty(g,\beta)$ is abelian (as conjugacy induces an isomorphism of the latter with $\gal(F_\infty(f,\alpha)/L)$). Now by Corollary \ref{basepoint_is_constant} we must have $|\beta|\le 1$ for every absolute value in $L$, which amounts to say that $\beta\in \overline{\F}_p$. Finally, claim $(\heartsuit)$ shows that $(f,\alpha)$ and $(g,\beta)$ are $F^{\ab}$-conjugate, completing the proof.

\section{Abelian dynamical Galois groups and Drinfeld modules} \label{Section: Drinfeld}

One of the questions left open by Theorem \ref{main_thm} is what happens when $p\le d$. Certainly the theorem does not hold true in this setting, because Drinfeld modules with complex multiplication yield examples of non-isotrivial rational maps with virtually abelian dynamical Galois group. The map $f=Tx+x^q\in \F_q(T)[x]$ is the prototypical example of this phenomenon: it has a superattracting critical point, since it is a polynomial, it is not isotrivial, but $G_\infty(f,0)$ is abelian. Notice that it is still a PCF map, since it has no finite critical point. This section is dedicated to explaining which class of examples this map belongs to, and formulating a positive characteristic version of Conjecture \ref{ap_conjecture}.

We start by briefly recalling some definitions and facts in the theory of Drinfeld modules. Let $X$ be a smooth, projective, irreducible curve over $\F_q$ and let $\infty$ be a point of $X$. Let $A$ be the ring of functions on $X$ that are regular outside $\infty$. Let $F$ be a finitely generated field over $\F_q$. We let $F\{\tau\}$ be the ring of \emph{twisted polynomials}; its elements are polynomials in $\tau$ with the usual addition and multiplication given by the rules $\tau^{i}\tau^j=\tau^{i+j}$ for every $i,j\ge 0$ and $\tau a=a^q\tau$ for every $a\in F$. This is isomorphic to the ring $\{\sum_{i=0}^na_ix^{q^i}\colon a_i\in F, n\ge 0\}$ with the operations being addition and composition.

Now let $\gamma\colon A \to F$ be a non-constant homomorphism. A \emph{Drinfeld module} of rank $r\in \N\setminus \{0\}$ defined over $F$ is an injective homomorphism $\psi\colon A\to F\{\tau\}$ such that for every $a\in A\setminus \{0\}$ we have
$$\psi_a=\gamma(a)+\sum_{i=1}^{r\deg a}a_i\tau^i$$
for some $a_1,\ldots,a_{r\deg a}\in F$ (note that traditionally, the image of $a$ is denoted by $\psi_a$ rather than $\psi(a)$). We say that $\psi$ has \emph{generic characteristic} if $\gamma$ is injective, and \emph{special characteristic} otherwise. In the latter case, the prime ideal $\ker \gamma$ is called the \emph{characteristic} of $\psi$.

The \emph{endomorphism ring} of $\psi$ is 
$$\en(\psi)\coloneqq\{\mu\in \overline{F}\{\tau\}\colon \mu\psi(a)=\psi(a)\mu \mbox{ for every }a \in A\},$$
and it is a finitely generated projective $A$-module. If $K$ is the quotient field of $A$, we have that $\en(\psi)^0\coloneqq\en(\psi)\otimes K$ is a division algebra, and if $Z$ is its center then $\dim_Z \en(\psi)^0=d^2$ and $[Z\colon K]=e$ for some positive integers $d,e$ such that $de\mid r$. If $\psi$ has generic characteristic then $\en(\psi)^0$ is commutative, and hence $\dim_K\en(\psi)^0\mid r$. If equality holds, we say that $\psi$ has \emph{complex multiplication}.

Finally, if $\psi'\colon A\to F\{\tau\}$ is another Drinfeld module then an \emph{isogeny} is a non-zero element $\mu\in \overline{F}\{\tau\}$ such that $\mu\psi(a)=\psi'(a)\mu$ for every $a\in A$. An isogeny of $\tau$-degree $0$ is called an \emph{isomorphism}. If $\psi$ has special characteristic and it is isomorphic to a Drinfeld module defined over a finite field, we say that $\psi$ is \emph{isotrivial} (notice that this cannot happen in generic characteristic).

Every $a\in A$ induces an $\F_q$-linear map $\overline{F}\to \overline{F}$ via the linearized polynomial associated to $\psi_a$. The \emph{$a$-torsion} of $\psi$, denoted by $\psi[a]$, is the kernel of $\psi_a$. If $I\subseteq A$ is a non-zero ideal, the \emph{$I$-torsion} of $\psi$, denoted by $\psi[I]$, is $\bigcap_{a\in I}\psi[a]$. We will denote by $\psi[I^{\infty}]$ the union of all the $\psi[I^n]$, where $n$ runs over the naturals. It is one of the basic facts in the theory that $\psi[I]$ is a free $A/I$-module of rank $r$ whenever $\ker\gamma$ is relatively prime with $I$. We remark that a point $\alpha\in \overline{F}$ is in $\psi[I]$ for some non-zero ideal $I\subseteq A$ if and only if for every $b\in A$ the point $\alpha$ is preperiodic for the map $\psi_b$.

From now on, we let $F/\F_q(T)$ be a finite extension and
\begin{equation}\label{drinfeld_phi}
	\phi_T=\sum_{i=0}^r\phi_i\tau^i\in F\{\tau\} \mbox{ with }r \ge 1 \mbox{ and }\phi_0\phi_r\ne 0.
\end{equation}
With a slight abuse of notation, we will also denote by $\phi_T=\sum_{i=0}^r\phi_ix^{q^i}\in F[x]$ the associated linearized polynomial. The map $\phi_T$ can be thought of as the image of $T$ in a rank $r$ Drinfeld module $\phi\colon \F_q[T]\to F\{\tau\}$, where the structure map $\gamma\colon \F_q[T]\to F$ is defined by $\gamma(T)=\phi_0$ (hence why the use of $\phi_T$ as notation). Notice that the condition $\phi_0\ne 0$ ensures that if $\phi$ has special characteristic, the characteristic is not $(T)$.

\begin{remark}\label{new_drinfeld_module}
	Given a Drinfeld module $\phi\colon \F_q[T]\to F\{\tau\}$ of rank $r$, let $B$ be a maximal commutative subalgebra of $\en(\phi)^0$, and let $A$ be the integral closure of $\F_q[T]$ in $B$. Then by \cite[Proposition 4.1]{devic}, up to replacing $F$ by a finite extension there is a Drinfeld module $\widehat{\phi}\colon A\to F\{\tau\}$ of rank $r/de$ whose restriction to $\F_q[T]$ is $F$-isogenous to $\phi$ and whose endomorphism ring is $A$. If $\phi$ has generic characteristic, then it has complex multiplication if and only if $\widehat{\phi}$ has rank $1$, by definition, while if $\phi$ has special characteristic, then it is isotrivial if and only if $\widehat{\phi}$ has rank $1$ by \cite[Proposition 6.3]{devic}.
\end{remark}

In order to ease the exposition, from now on we will say that $\phi$ has \emph{potential rank $1$} if $de=r$, or equivalently if $\widehat{\phi}$ has rank $1$.
%

\begin{lemma}\label{drinfeld_cm}
	Let $\phi\colon \F_q[T] \to F\{\tau\}$ be a Drinfeld module as in \eqref{drinfeld_phi} and let $\alpha \in F$.
	\begin{enumerate}
		\item If $\phi$ has potential rank $1$ and $\alpha$ is preperiodic for $\phi_T$ then $G_\infty(\phi_T,\alpha)$ is virtually abelian.
		\item If there exist a finite extension $F'/F$, a non-constant rational function $f\in F'(x)$ and an increasing sequence of positive integers $\{N_k\}_{k\ge 1}$ such that for every $k\ge 1$ and every $z\in \phi_T^{-N_k}(\alpha)$ we have that $z$ is a pole of $f$ or $f(z)\in {F'}^{\ab}$, then $\phi$ has potential rank $1$ and $\alpha$ is preperiodic for $\phi_T$.
	\end{enumerate}
\end{lemma}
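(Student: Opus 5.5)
\emph{Part (1).} I would reduce to torsion. If $\alpha$ is preperiodic for $\phi_T$, then $\phi_T^{m+k}(\alpha)=\phi_T^m(\alpha)$ for some $m\ge0$, $k\ge1$. By additivity this says $\alpha\in\phi[T^m(T^k-1)]$. Every $z\in\phi_T^{-n}(\alpha)$ then satisfies $\phi_{T^{n+m}(T^k-1)}(z)=0$, so $F_\infty(\phi_T,\alpha)\subseteq F(\phi[\mathrm{tors}])$, the field generated by all torsion of $\phi$. It therefore suffices to show that $F(\phi[\mathrm{tors}])$ is abelian over some finite extension $F'/F$.

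By Remark \ref{new_drinfeld_module}, after a finite extension there is a rank one Drinfeld module $\widehat{\phi}\colon A\to F'\{\tau\}$ with $\en(\widehat\phi)=A$, and an isogeny $\nu\colon\widehat\phi|_{\F_q[T]}\to\phi$ defined over $F'$. I would take $\nu$ in this direction, using the existence of dual isogenies and enlarging $F'$ if necessary. For every nonzero $a$, $\nu$ maps $\widehat\phi[a']$ onto $\phi[a]$ for a suitable multiple $a'$ of $a$, so $F'(\phi[\mathrm{tors}])\subseteq F'(\widehat\phi[\mathrm{tors}])$.

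For a rank one module with all endomorphisms defined over $F'$, the action of $G_{F'}$ on each $\widehat\phi[I]$ commutes with the $A$-action. Since $\widehat\phi[I]$ is free of rank one over $A/I$ (for $I$ prime to the characteristic), the Galois image lands in $(A/I)^\times$, which is abelian. The finitely many primes dividing the characteristic contribute only a finite (or, in generic characteristic, no) obstruction, and I would absorb this into $F'$. This gives virtual abelianity.

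\emph{Part (2).} I would use a commutator-orbit argument. Let $G=\gal(F'(\phi_T^{-N_k}(\alpha))/F')$ and let $H=[G,G]$. Every $\sigma\in H$ fixes every finite value $f(z)$ with $z\in\phi_T^{-N_k}(\alpha)$, because these values lie in ${F'}^{\ab}$. Hence $\sigma$ maps each $z$ into the fibre $f^{-1}(f(z))$ or into the pole set of $f$. Both sets have size at most $\deg f$, so every $H$-orbit on level $N_k$ has size at most $2\deg f$, uniformly in $k$. The goal is to show that this uniform bound forces both conclusions.

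First, suppose $\alpha$ is not preperiodic, so $\alpha$ is non-torsion. Fix a base preimage $z_0$. Then the level $\phi_T^{-N}(\alpha)$ equals $z_0+\phi[T^N]$, and the Galois action is through an affine group $\phi[T^N]\rtimes(\text{image on torsion})$. A Kummer theory for Drinfeld modules (the analogue of Ribet and Bertrand for non-torsion points) says the translation part is open in $T_T(\phi)$. The commutator subgroup of this affine group contains translations by $(g-1)\cdot v$ for $g$ acting nontrivially on torsion and $v$ in an open subgroup. The orbits of these translations grow without bound, which contradicts the bound from the previous paragraph. Second, suppose $\phi$ is not of potential rank $1$ but $\alpha$ is torsion. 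I would invoke the open image theorem of Pink and Pink–R\"utsche: the image on $T_T(\phi)$ is open in the centralizer of $\en(\phi)$ in $\GL$, or in the special characteristic non-isotrivial case its analogue. Its commutator subgroup is then open in an $\mathrm{SL}$-type group and has unbounded orbits on $\phi[T^N]$, again a contradiction.

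I expect the main obstacle to be the second half of part (2), and in particular uniformity in the special characteristic case, where the relevant open image results are more delicate. My first attempt would be to work prime by prime at a single prime $\mathfrak l\nmid T\cdot\mathrm{char}$ rather than at $T$. I would replace $N_k$ accordingly, using that the $T^{N_k}$-levels contain the relevant $\mathfrak l$-adic information up to bounded index, and only then invoke Pink's $\mathfrak l$-adic open image theorem.
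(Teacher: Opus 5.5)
Your proposal is correct in outline, and your Kummer step is the same as the paper's. The rest is organized differently.

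In part (1), the paper splits by characteristic. In special characteristic, potential rank $1$ means isotrivial, so all torsion lies in $F\overline{\F}_q$; in generic characteristic it quotes abelianity of rank-one torsion fields. Your argument, that $G_{F'}$ acts on $\widehat\phi[I]\cong A/I$ through $(A/I)^\times$, is uniform in both characteristics. The precise content of your ``finite obstruction'' is that a rank-one module in special characteristic $\mathfrak p_0$ has height equal to its rank. Hence $\widehat\phi[\mathfrak p_0^\infty]=0$, and there is no obstruction at all.

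In part (2), the paper proves potential rank $1$ first, with no information on $\alpha$. Each $z$ is a root of $f(x)-f(z)$ over $F^{\mathrm{ab}}$, so $\gal(F(\phi_T^{-\infty}(\alpha))F^{\mathrm{ab}}/F^{\mathrm{ab}})$ has exponent dividing $(\deg f)!$. Hence commutators in the $T$-adic image have bounded order. This contradicts Zariski density of the image for $\widehat\phi$ (Pink's Mumford--Tate theorem), through the elementary fact that $[g,h]^{M!}=1$ is a proper closed condition on $\GL_n^2$ for $n\ge 2$.

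Your single mechanism, that commutators have uniformly bounded orbits because they preserve the fibres of $f$, also works for both halves, but it needs two small additions. First, pass from the level set $x_N+\phi[T^N]$ to $\phi[T^N]$ via $\sigma(w)=\sigma(x_N+w)-\sigma(x_N)$, which bounds orbits by $(\deg f)^2$. Second, note that a group of automorphisms of $T_T(\phi)$ whose orbits on every $T_T(\phi)/T^N$ have size at most $D$ has at most $D^r$ elements. In the Kummer step you should also verify the hypothesis of Pink's theorem, as the paper does: no nonzero endomorphism of $\phi'|_B$ kills $\mu(\alpha)$, and this follows from $\alpha$ being non-torsion.

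With that finiteness observation, all you need is that the commutator subgroup of the $T$-adic image is infinite. This follows from Zariski density alone, so no open image theorem is required and the special-characteristic difficulty you anticipate does not arise. Density is available at a prime of $A$ above $(T)$, which is never the characteristic because $\phi_0\neq 0$.

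Your fallback of moving to a prime $\mathfrak l\nmid T$ could not work. The levels $\phi_T^{-N}(\alpha)$ lie in $x_N+\phi[T^N]$ and carry no $\mathfrak l$-adic information for $\mathfrak l\neq (T)$, so the hypothesis gives you nothing there.
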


\begin{proof}
	From now on, we let $\widehat{\phi}\colon A \to F\{\tau\}$ be the Drinfeld module of Remark \ref{new_drinfeld_module}, that exists after replacing $F$ by a finite extension.
	
	$(1)$ If $\phi$ has special characteristic, then since it has potential rank $1$ it is isotrivial. This means that up to replacing $F$ by a finite extension, there is an $F$-isomorphism $\vartheta$ from $\phi$ to a Drinfeld module $\phi'$ defined over $\overline{\F}_q$. Of course isomorphic Drinfeld modules have the same torsion fields, and the field generated by the torsion of $\phi'$ is abelian since it is an algebraic extension of $\F_q$. Since $\alpha$ is torsion for $\phi$ we have that $F({\phi}_T^{-\infty}(\alpha))\subseteq F(\phi_{\text{tor}})$, and the claim follows.
	
	Suppose now that $\phi$ has generic characteristic. The Drinfeld module $\widehat{\phi}$ has rank $1$ and it is a well-known fact in the theory of Drinfeld modules that the extension $F(\widehat{\phi}_{\text{tor}})/F$ generated by all torsion points of $\widehat{\phi}$ is abelian. It follows that $F(({\widehat{\phi}|_{\F_q[T]}})_{\text{tor}})/F$ is abelian, and since isogenous Drinfeld modules have the same torsion fields, $F(\phi_{\text{tor}})/F$ is abelian as well. Since $\alpha$ is torsion for $\phi$ we have that $F({\phi}_T^{-\infty}(\alpha))\subseteq F(\phi_{\text{tor}})$, and the claim follows.

	$(2)$ Up to replacing $F$ by a finite extension, we can assume that $F'=F$. 
	
	First we show that $\widehat{\phi}$ has rank $1$. Let 
	$$\Gamma_T\coloneqq\gal(F(\widehat{\phi}[T^\infty])/F).$$
	The Galois action gives a homomorphism $\Gamma_T\to \prod_{\p\mid (T)}\GL_n(K_\p)$, where $n$ is the rank of $\widehat{\phi}$, the product runs over all prime ideals of $A$ that divide $(T)$ and $K_\p$ is the fraction field of ${A}_\p$. By \cite[Theorem 1.1]{pink4} and \cite[Theorem 0.1]{pink1}, if we choose a prime $\p$ dividing $(T)$ we have that the projection of the image of $\Gamma_T$ in $\GL_n(K_\p)$ is Zariski dense. Since there exists an $F$-isogeny $\mu\colon\phi\to\widehat{\phi}|_{\F_q[T]}$, we get that  $\Gamma_T'\coloneqq\gal(F(\mu(\phi[T^\infty]))/F)$ is Zariski dense in $\GL_n(K_\p)$. Now, the field extension $F(\phi_T^{-\infty}(\alpha))$ certainly contains $F(\phi[T^\infty])$, because if $z\in F^{\text{sep}}$ satisfies $\phi_T^n(z)=\alpha$ for some $n\ge 1$, $\phi_T^n(z+w)=\alpha$ for every $w\in \phi[T^n]$, and hence $F(\phi[T^n])\subseteq F(\phi_T^{-n}(\alpha))$. On the other hand by hypothesis we know that for every $k\ge 1$ the extension $F(f(z)\colon z\in \phi_T^{-N_k}(\alpha), z \mbox{ is not a pole of } f,k\ge 1)/F$ is abelian; this shows that for every $k\ge 1$ the Galois group of the extension $F(\phi_T^{-N_k}(\alpha))\cdot F^{\ab}/F^{\ab}$ has finite exponent, bounded by $(\deg f)!$. Since for every $n\ge 1$ there exists $k\ge 1$ such that $N_k\ge n$ and the Galois group of $F(\phi_T^{-N_k}(\alpha))\cdot F^{\ab}/F^{\ab}$ surjects onto that of $F(\phi_T^{-n}(\alpha))\cdot F^{\ab}/F^{\ab}$, it follows that the Galois group of $F(\phi_T^{-\infty}(\alpha))\cdot F^{\ab}/F^{\ab}$ has finite exponent, bounded by $(\deg f)!$. In turn, the same must hold true for $F(\phi[T^\infty])\cdot F^{\ab}/F^{\ab}$, and hence also for $F(\mu(\phi[T^\infty]))\cdot F^{\ab}/F^{\ab}$. This implies that every commutator in $\Gamma_T'$ has order bounded by $(\deg f)!$, finally implying that $n=1$, as the following very general claim shows.
	\begin{center}
		Let $n\ge 2$ be an integer, let $L$ be an infinite field and let $G$ be a Zariski dense subgroup of $\GL_n(L)$. Then for every $M\ge 1$, $G$ contains a commutator of order $>M$.
	\end{center}
	Let $N\coloneqq M!$ and consider the closed subvariety
	$$Z_N\coloneqq \{(g,h)\in \GL_n(L)\colon [g,h]^N=1\}.$$
	This is proper: if $a\in L$ is such that $a^N\ne 1$ then if $D=\text{diag}(a,1,\ldots,1)$ and $S$ swaps the first two basis vectors, then $[D,S]=\text{diag}(a,a^{-1},1,\ldots,1)$ has order $>M$. Since $G$ is Zariski dense in $\GL_n(L)$, then $G\times G$ is Zariski dense in $\GL_n(L)^2$ and hence it is not contained in $Z_N$. Thus there are $g,h\in G$ such that $[g,h]^{M!}\ne 1$.
	
	Next, we prove that $\alpha$ is preperiodic for $\phi_T$, or, equivalently, that $\alpha$ is torsion for $\phi$. Suppose, by contradiction, that $\alpha$ is not torsion. Put
	\[
	A_0\coloneqq\mathbb F_q[T]
	\qquad\text{and}\qquad
	M\coloneqq A_0\cdot\alpha\subseteq F,
	\]
	where the $A_0$-module structure is induced by $\phi$. Since $\alpha$
	is not torsion, $M$ is a free $A_0$-module of rank
	$1$.
	
	We first claim that the hypothesis of \cite[Theorem~6.7]{pink3} is satisfied for $(\phi,M)$. Choose data	$(A',F_1,\phi',\mu,B)$ as in \cite[Proposition~4.3(c)]{pink3}: thus $A'$ is the normalization
	of the center of $\operatorname{End}_{F^{\mathrm{sep}}}(\phi)$,
	$\phi'$ is a Drinfeld $A'$-module, and	$\mu\colon\phi\longrightarrow\phi'|_{A_0}$ is an isogeny. Put
	\[
	S\coloneqq
	\operatorname{End}_{F^{\mathrm{sep}}}\bigl(\phi'|_B\bigr).
	\]
	The aforementioned hypothesis is that the natural map
	\begin{equation}\label{eq:pink-injectivity}
		\begin{split}
			\iota\colon S\otimes_{A_0}M&\longrightarrow F^{\mathrm{sep}},\\
			\sum_i s_i\otimes m_i&\longmapsto
			\sum_i s_i\bigl(\mu(m_i)\bigr)
		\end{split}
	\end{equation}
	is injective. Since $M$ is freely generated by $\alpha$, the source of
	\eqref{eq:pink-injectivity} is naturally identified with $S$, and
	under this identification $\iota$ becomes
	\[
	S\longrightarrow F^{\mathrm{sep}},
	\qquad
	s\longmapsto s\bigl(\mu(\alpha)\bigr).
	\]
	If	$s\bigl(\mu(\alpha)\bigr)=0$ for some $0\neq s\in S$ then $\mu(\alpha)$ is contained in the finite subgroup $\ker(s)$. This is stable under the $B$-action, and hence it is torsion: there is some $0\ne b\in B$ such that $\phi'_b(\mu(\alpha))=0$.	Since $A'$ is finite over $A_0$, the nonzero ideal $bA'$ has nonzero
	contraction to $A_0$. Choose $0\neq a\in bA'\cap A_0$: then
	$$\phi'_a\bigl(\mu(\alpha)\bigr)=\mu\bigl(\phi_a(\alpha)\bigr)=0.$$
	This shows that $\phi_a(\alpha)\in \ker\mu$, and hence it is
	annihilated by some $0\neq c\in A_0$. It follows that $\alpha$ is annihilated by $ca$, contradicting our assumption.
		
	By \cite[Theorem~6.7]{pink3}, the adelic Kummer
	subgroup $\Delta_{\mathrm{ad},M}$ (that is the kernel of the restriction of the action on $T_{\mathrm{ad}}(\phi,M)$ to $T_{\mathrm{ad}}(\phi)$) is therefore open in
	\[
	\operatorname{Hom}_{A_0}
	\bigl(M,T_{\mathrm{ad}}(\phi)\bigr)
	\cong T_{\mathrm{ad}}(\phi).
	\]

	Let	$\mathfrak t\coloneqq(T)\subseteq A_0$ (notice that since $\phi_0=\gamma(T)\neq 0$, the prime $\mathfrak t$ is different	from the characteristic of $\phi$). Let	$T_{\mathfrak t}(\phi)
	=\varprojlim_n\phi[T^n]$ be the $\mathfrak t$-adic Tate module and let $V\subseteq T_{\mathfrak t}(\phi)$ be the projection of $\Delta_{\mathrm{ad},M}$ onto its	$\mathfrak t$-adic component, that is open. Consequently,
	there exists $m\geq 0$ such that
	\begin{equation}\label{eq:open-kummer-subgroup}
		T^mT_{\mathfrak t}(\phi)\subseteq V.
	\end{equation}
	
	Choose a compatible sequence of division points
	\[
	x_n\in\phi_{T^n}^{-1}(\alpha),
	\qquad
	\phi_T(x_{n+1})=x_n.
	\]
	The Galois action on the $\mathfrak t$-power division tower of
	$\alpha$ is described by an affine representation
	\[
	\begin{split}
	\widetilde{\rho}_{\alpha,\mathfrak t}\colon
	G_F&\longrightarrow
	T_{\mathfrak t}(\phi)\rtimes
	\operatorname{Aut}_{A_{0,\mathfrak t}}
	\bigl(T_{\mathfrak t}(\phi)\bigr)\\
	\sigma&\longmapsto (u_\sigma,\rho_{\mathfrak t}(\sigma)),
	\end{split}
	\]
	and the image of $u_\sigma$ modulo $T^n$ is characterized by
	\[
	\sigma(x_n)=x_n+u_{\sigma,n}.
	\]
	
	The image of the linear representation
	\[
	\rho_{\mathfrak t}\colon
	G_F\longrightarrow
	\operatorname{Aut}_{A_{0,\mathfrak t}}
	\bigl(T_{\mathfrak t}(\phi)\bigr)
	\]
	is nontrivial, since the prime-to-characteristic rational torsion is finite. We may therefore choose
	$\sigma\in G_F$ such that
	\[
	\gamma\coloneqq\rho_{\mathfrak t}(\sigma)\neq 1.
	\]
	Write
	\[
	\widetilde{\rho}_{\alpha,\mathfrak t}(\sigma)=(u,\gamma).
	\]
	
	For every $v\in V$, by definition of $\Delta_{\mathrm{ad},M}$ there exists $\tau_v\in G_F$ such that
	\[
	\widetilde{\rho}_{\alpha,\mathfrak t}(\tau_v)=(v,1).
	\]
	Since
	\begin{equation}\label{eq:affine-commutator}
		\bigl[(u,\gamma),(v,1)\bigr]
		=\bigl((\gamma-1)v,1\bigr),
	\end{equation}
	it follows that the $\mathfrak t$-adic translation parts of	commutators contain the subgroup
	\[
	W\coloneqq(\gamma-1)V
	\subseteq T_{\mathfrak t}(\phi).
	\]
	
	The group $W$ is infinite. Indeed, by
	\eqref{eq:open-kummer-subgroup} it contains $(\gamma-1)T^mT_{\mathfrak t}(\phi)$, and since $\gamma\neq 1$ surely there exists	$y\in T_{\mathfrak t}(\phi)$ such that	$w_0\coloneqq(\gamma-1)T^my\neq 0$.	As $\gamma$ is $A_{0,\mathfrak t}$-linear, we have that	$A_{0,\mathfrak t}w_0\subseteq W$.
	
	Now for every $n\geq 1$, let
	\[
	W_n\subseteq
	T_{\mathfrak t}(\phi)/T^nT_{\mathfrak t}(\phi)
	\cong\phi[T^n]
	\]
	be the image of $W$. Since $w_0\neq 0$, we obtain
	$$
		|W_n|\longrightarrow\infty
		\qquad\text{as }n\longrightarrow\infty.
	$$
	
	By \eqref{eq:affine-commutator}, for every $w\in W_n$ there is a
	commutator $c_w\in G_F$ such that
	\begin{equation}\label{eq:commutator-translation}
		c_w(x_n)=x_n+w.
	\end{equation}
	Let $d\coloneqq\deg(f)$; since $N_k\to\infty$ there exists $k$ such that $|W_{N_k}|>d$. Set $n=N_k$. By hypothesis, $f(x_n)\in\mathbb P^1(F^{\mathrm{ab}})$. On the other hand, every commutator in $G_F$ acts trivially on $F^{\mathrm{ab}}$, as	well as on the point $\infty$. Since $f$ is defined over $F$,
	\eqref{eq:commutator-translation} gives
	\[
	f(x_n+w)
	=f\bigl(c_w(x_n)\bigr)
	=c_w\bigl(f(x_n)\bigr)
	=f(x_n)
	\]
	for every $w\in W_n$. Hence
	\[
	\{x_n+w\colon w\in W_n\}\subseteq f^{-1}(f(x_n)),
	\]
	implying that $\#f^{-1}(f(x_n))	\geq |W_n|	>d$, and hence contradicting the fact that $\deg f=d$.
\end{proof}
Applying Lemma \ref{drinfeld_cm} with $f=x$ and $N_k=k$ for every $k\ge 1$ allows us to deduce the following corollary.

\begin{corollary}\label{abelian_gal_reps}
	$G_\infty(\phi_T,\alpha)$ is virtually abelian if and only if $\phi$ has potential rank $1$ and $\alpha$ is preperiodic for $\phi_T$. 
\end{corollary}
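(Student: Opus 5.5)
The corollary should follow directly from Lemma \ref{drinfeld_cm}. I will treat the two directions separately, using part $(1)$ for sufficiency and part $(2)$ with $f=x$, $N_k=k$ for necessity. The only nontrivial point is that part $(2)$ assumes points land in ${F'}^{\ab}$ for some finite extension $F'/F$. Virtual abelianity only says that some finite-index subgroup of $G_\infty(\phi_T,\alpha)$ is abelian, so this subgroup has to be turned into an abelian field extension over a suitable $F'$.

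For the direction ``$\Leftarrow$'': if $\phi$ has potential rank $1$ and $\alpha$ is preperiodic for $\phi_T$, then Lemma \ref{drinfeld_cm}$(1)$ gives the conclusion verbatim.

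For ``$\Rightarrow$'': suppose $H\le G_\infty(\phi_T,\alpha)=\gal(F_\infty(\phi_T,\alpha)/F)$ is an abelian subgroup of finite index. Since $G_\infty$ is profinite, I first want $H$ closed. I can replace $H$ by its closure $\overline H$, which is still abelian because commutation is a closed condition. It still has finite index, since it contains $H$, and a closed subgroup of finite index is open. Let $F'\coloneqq F_\infty(\phi_T,\alpha)^{\overline H}$; this is a finite extension of $F$.

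Then $F'_\infty(\phi_T,\alpha)=F_\infty(\phi_T,\alpha)$, and $\gal(F_\infty(\phi_T,\alpha)/F')=\overline H$ is abelian. Hence every $z\in\phi_T^{-k}(\alpha)$ lies in ${F'}^{\ab}$ for every $k\ge 1$. This is exactly the hypothesis of Lemma \ref{drinfeld_cm}$(2)$ with the non-constant rational function $f=x\in F'(x)$, which has no finite poles, and $N_k=k$. The lemma then yields that $\phi$ has potential rank $1$ and $\alpha$ is preperiodic for $\phi_T$.

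I expect no real obstacle here beyond the bookkeeping above. The one point to check is that Lemma \ref{drinfeld_cm}$(2)$ genuinely allows $F'\ne F$, which it does, since its proof begins by replacing $F$ with a finite extension. Separability of the preimages is automatic because $\phi_0\neq0$, so $\phi_T'$ is a nonzero constant.
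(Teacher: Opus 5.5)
Your proposal is correct and is exactly the paper's argument: sufficiency is Lemma~\ref{drinfeld_cm}(1) verbatim, and necessity is Lemma~\ref{drinfeld_cm}(2) applied with $f=x$ and $N_k=k$. Your only addition is bookkeeping the paper leaves implicit. You replace a finite-index abelian subgroup by its closure, which is open and abelian, so its fixed field $F'$ is finite over $F$ with $F_\infty(\phi_T,\alpha)\subseteq {F'}^{\ab}$; this step is handled correctly.
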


Although cases described in Lemma \ref{drinfeld_cm} furnish, in generic characteristic, a whole new set of examples of abelian dynamical Galois groups, these cannot be the whole story. In fact, similarly to what happens with Chebishev polynomials and Latt\`es maps, one can conceive a notion of "quotient map" for affine polynomials in positive characteristic, and these quotient maps can yield genuinely new examples.

\begin{definition}
	A rational map $\psi\in F(x)$ is called a \emph{Drinfeld-Latt\`es} map if it is semiconjugate to a map of the form $\phi_T+\delta$, where $\phi_T=\sum_{i=0}^r\phi_ix^{q^i}\in \overline{F}[x]$ with $r\ge 1$, $\phi_0\phi_r\ne 0$ and $\delta\in \overline{F}$ is a preperiodic point for $\phi_T$ or, equivalently, a torsion point for $\phi$. In other words, there exist $\phi_T,\delta$ and $f\in \overline{F}(x)$ of positive degree such that the following diagram commutes:
	
	\begin{equation} \label{semiconjugacy}
		\xymatrix{
			\P_{\overline{F}}^1 \ar[r]^{\phi_T+\delta} \ar[d]_{f} &\P_{\overline{F}}^1 \ar[d]^{f} \\
			\P_{\overline{F}}^1 \ar[r]^{\psi}       & \P_{\overline{F}}^1}.
	\end{equation}		
\end{definition}

\begin{remark}
	Every Drinfeld-Latt\`es map $\psi$ is $\overline{F}$-conjugate to a polynomial. In fact, suppose that $\psi\in F(x)$. If $f(\infty)=\infty$ then by noticing that $\deg\psi=\deg \phi_T$ and looking at ramification indices we see that $\infty$ must have ramification index $\deg \psi$ under $\psi$, which is equivalent to saying that $\psi$ is a polynomial. If $f(\infty)=a$ for some $a\in \overline{F}$, then up to replacing $f$ with $\frac{1}{x-a}\circ f$ and $\psi$ with $\psi'=\frac{1}{\psi\left(a+\frac{1}{x}\right)-a}$ we have that $f$ fixes $\infty$, and by the argument above we have that $\psi'$, that is the conjugate of $\psi$ via $\frac{1}{x-a}$, is a polynomial.
\end{remark}
\begin{example}
	Consider the Carlitz module given by $\phi_T=Tx+x^q$, and let $f=x^{q-1}$. Then
	$$f\circ\phi_T=(x(T+x^{q-1}))^{q-1},$$
	so that $\psi=x(T+x)^{q-1}$ is a Drinfeld-Latt\`es map.
\end{example}
In the number fields setting, one can show (see \cite[Theorem~13]{andrews}) that if $T_d$ is a Chebishev polynomial then $G_\infty(T_d,\alpha)$ is abelian if and only if $\alpha=\zeta+\zeta^{-1}$ for a root of unity $\zeta$, and the same holds true for $-T_d$. Moreover, if $\phi$ is a Latt\`es map then one can show (see \cite[Theorem~D]{ferostzan}) that $G_\infty(\phi,\alpha)$ is virtually abelian if and only if the elliptic curve associated with $\phi$ has complex multiplication and $\alpha$ is preperiodic for $\phi$ (that is equivalent to saying that it is the projection of a torsion point on the curve). The next theorem is an analogue of the aforementioned two results in the setting of Drinfeld-Latt\`es maps.

\begin{theorem}\label{abelian_quotients}
	Let $\psi\in F[x]$ be a Drinfeld-Latt\`es map fitting in a diagram as \eqref{semiconjugacy}, let $\alpha\in \overline{F}$ be a non-pole for $f$ and let $\beta=f(\alpha)$. Then the following are equivalent:
	\begin{enumerate}
		\item $G_\infty(\phi_T,\alpha)$ is virtually abelian.
		\item $G_\infty(\phi_T+\delta,\alpha)$ is virtually abelian.
		\item $G_\infty(\psi,\beta)$ is virtually abelian.
	\end{enumerate}
\end{theorem}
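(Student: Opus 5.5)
The plan is to reduce everything to the Drinfeld module $\phi$ itself and then use both parts of Lemma \ref{drinfeld_cm}. Write $\chi\coloneqq\phi_T+\delta$. Since $\phi_T$ is additive, induction gives
$$\chi^n(x)=\phi_{T^n}(x)+c_n,\qquad c_n\coloneqq\sum_{i=0}^{n-1}\phi_{T^i}(\delta).$$
Because $\delta$ is torsion, every $c_n$ lies in the finite $A$-submodule generated by $\delta$. Hence $\{c_n\}$ takes only finitely many values, all of them torsion. So $\chi^{-n}(\alpha)=\phi_{T^n}^{-1}(\alpha-c_n)$. Passing to a finite extension of $F$ containing $\delta$, all the $c_n$, and the coefficients of $f$ and $\phi_T$ changes nothing, since virtual abelianity is insensitive to finite base change. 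I will prove $(1)\Rightarrow(2)\Rightarrow(1)$, then $(1)\Rightarrow(3)$ and $(3)\Rightarrow(1)$.

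For $(1)\Rightarrow(2)$, Corollary \ref{abelian_gal_reps} gives that $\phi$ has potential rank $1$ and $\alpha$ is torsion. Then every $\alpha-c_n$ is torsion, so $\chi^{-\infty}(\alpha)\subseteq\phi_{\mathrm{tor}}$. The field $F(\phi_{\mathrm{tor}})$ is virtually abelian over $F$ by the argument in Lemma \ref{drinfeld_cm}(1): it is abelian after a finite extension, via the isogeny to $\widehat\phi$ of rank $1$, or via isotriviality in special characteristic.

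For $(2)\Rightarrow(1)$, choose an increasing sequence $N_k$ along which $c_{N_k}=c$ is constant. Then every $z\in\phi_T^{-N_k}(\alpha-c)$ lies in $\chi^{-N_k}(\alpha)$, hence in ${F'}^{\ab}$ for a suitable finite $F'/F$. Lemma \ref{drinfeld_cm}(2) with the rational function $x$ then gives that $\phi$ has potential rank $1$ and $\alpha-c$ is torsion. Since $c$ is torsion, $\alpha$ is torsion, and Corollary \ref{abelian_gal_reps} gives $(1)$.

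For $(3)\Rightarrow(1)$, I use the same subsequence, now with the rational function $f$. If $z\in\phi_T^{-N_k}(\alpha-c)=\chi^{-N_k}(\alpha)$ is not a pole of $f$, the diagram \eqref{semiconjugacy} gives $\psi^{N_k}(f(z))=f(\chi^{N_k}(z))=f(\alpha)=\beta$. So $f(z)\in\psi^{-N_k}(\beta)\subseteq{F'}^{\ab}$ once $G_\infty(\psi,\beta)$ is abelian over a finite $F'/F$. This is exactly the hypothesis of Lemma \ref{drinfeld_cm}(2) with basepoint $\alpha-c$, and the extra flexibility of allowing a rational function $f$ in that lemma is precisely what is needed here. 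We conclude as in the previous paragraph.

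For $(1)\Rightarrow(3)$, the identity $f\circ\chi^n=\psi^n\circ f$, together with surjectivity of $f$ on $\mathbb P^1(\overline F)$, gives
$$\psi^{-n}(\beta)=f\bigl(\chi^{-n}(f^{-1}(\beta))\bigr).$$
So it suffices to show that every $\gamma\in f^{-1}(\beta)$ is torsion; the previous argument then places all of $\chi^{-\infty}(f^{-1}(\beta))$ in $\phi_{\mathrm{tor}}$, and applying $f$ keeps us inside $F(\phi_{\mathrm{tor}})$. Since $\alpha$ is torsion, it is preperiodic for $\chi$, so $\beta=f(\alpha)$ is preperiodic for $\psi$. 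For $\gamma\in f^{-1}(\beta)$, the orbit $\{\chi^n(\gamma)\}$ maps under $f$ into the finite $\psi$-orbit of $\beta$, and the fibres of $f$ are finite. Hence $\gamma$ is $\chi$-preperiodic. Because $\{c_n\}$ is finite, the set $\{\phi_{T^n}(\gamma)\}$ is then finite, so $\gamma$ is $\phi_T$-preperiodic, i.e. torsion. Poles of $f$ are excluded by hypothesis, but points of $f^{-1}(\beta)$ at $\infty$ need a separate check: $\infty$ is totally invariant for $\chi$ and so contributes only $f(\infty)$. This bookkeeping, together with verifying that Lemma \ref{drinfeld_cm}(2) really accepts the shifted basepoint $\alpha-c$ along a subsequence, is where I expect the main care to be needed.
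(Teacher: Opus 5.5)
Your proposal is correct and follows essentially the paper's route. The paper also uses the finitely many torsion values of the shifts $c_n$ and a constant subsequence to reduce $(2)\Rightarrow(1)$ and $(3)\Rightarrow(1)$ to Lemma \ref{drinfeld_cm}(2) at the basepoint $\alpha-c$; for the direction towards $(3)$ it likewise uses finiteness of the fibres of $f$ to show that every lift of a point of $\psi^{-n}(\beta)$ is preperiodic, hence torsion. The two points you flag are harmless: Lemma \ref{drinfeld_cm}(2) allows any basepoint in $F$ and any increasing sequence $N_k$, and $\infty$ contributes only the $F$-rational point $f(\infty)$.
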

\begin{proof}
	Since every claim is about virtual abelianity, it is harmless to assume that $\phi$ and $f$ are both defined over $F$ and that $\alpha,\delta\in F$, and hence we will do so.
	
	$(1)\implies (2)$. Start by noticing that the sequence
	\begin{equation}\label{torsion_sequence}
		\left\{\sum_{i=0}^n\phi_T^i(\delta)\right\}_{n\ge 1}
	\end{equation}
	takes only finitely many values, and every element of it is a torsion point for $\phi$. In fact, since $\delta$ is torsion for $\phi_T$, in particular it belongs to $\phi[b]$ for some $b\in \F_q[T]$, and the latter is a finite group acted on by $\phi_T$. In particular, every element of the sequence \eqref{torsion_sequence} is a $b$-torsion point for $\phi$. Since
	\begin{equation}\label{iterates_of_translate}
		(\phi_T+\delta)^n=\phi_T^n+\sum_{i=0}^{n-1}\phi_T^i(\delta),
	\end{equation}
	and $\alpha$ is torsion for $\phi$ by Corollary \ref{abelian_gal_reps}, this proves that every element in $(\phi_T+\delta)^{-n}(\alpha)$ is a torsion point for $\phi$. It follows that $(\phi_T+\delta)^{-\infty}(\alpha)\subseteq F({\phi}_{\text{tor}})$. Since $G_\infty(\phi_T,\alpha)$ is virtually abelian, by Corollary \ref{abelian_gal_reps} we get that $\phi$ has potential rank $1$ and $\alpha$ is preperiodic for $\phi_T$. But again this shows that up to replacing $F$ by a finite extension, $F(\phi_{\text{tor}})\subseteq F^{\text{ab}}$, as mentioned in the proof of Lemma \ref{drinfeld_cm}.
	
	$(2)\implies (1)$. Since the sequence \eqref{torsion_sequence} takes only finitely many values, there is an $N\in \N$ and an increasing sequence of indices $\{n_k\}_{k\ge 1}$ such that if $\overline{\delta}\coloneqq \sum_{i=0}^{N}\phi_T^i(\delta)$ then $\sum_{i=0}^{n_k}\phi_T^i(\delta)=\overline{\delta}$ for every $k\ge 1$. By \eqref{iterates_of_translate} we get that
	\begin{equation}\label{fixed_subsequence}
		(\phi_T+\delta)^{n_k+1}-\alpha=\phi_T^{n_k+1}+\overline{\delta}-\alpha
	\end{equation}
	for every $k\ge 1$. Up to replacing $F$ by a finite extension, the hypothesis implies that every $z\in \phi_T^{-(n_k+1)}(-\overline{\delta}+\alpha)$ belongs to $F^{\text{ab}}$, and since this holds for every $k$ we deduce that $\phi_T^{-\infty}(-\overline{\delta}+\alpha)\subseteq F^{\text{ab}}$. By Corollary \ref{abelian_gal_reps} this shows that $\phi$ has potential rank $1$ and that $-\overline{\delta}+\alpha$ is preperiodic for $\phi_T$, which in turn implies that $\alpha$ is preperiodic, since $\overline{\delta}$ is. Applying Lemma \ref{drinfeld_cm} we conclude that $G_\infty(\phi_T,\alpha)$ is virtually abelian.	
	
	$(2)\implies (3)$. Since we have proven $(1)$ and $(2)$ to be equivalent, $G_\infty(\phi_T,\alpha)$ is virtually abelian, and hence by Corollary \ref{abelian_gal_reps} we have that $\phi$ has potential rank $1$ and $\alpha$ is preperiodic for $\phi_T$, and in turn it is preperiodic for $\phi_T+\delta$ as well. Notice that this implies that $\beta$ is preperiodic for $\psi$: since $(\phi_T+\delta)^k(\alpha)=(\phi_T+\delta)^\ell(\alpha)$ for some $k\ne \ell$ then by applying $f$ we get that $\psi^k(\beta)=\psi^\ell(\beta)$. Now let $n\in \N$ and $\beta_n\in \psi^{-n}(\beta)$. Let $\alpha_n\in \mathbb P^1(\overline{F})$ be such that $f(\alpha_n)=\beta_n$. We claim that $\alpha_n$ is preperiodic for $\phi_T+\delta$. If $\alpha_n=\infty$, this is obvious. Otherwise, if $\{(\phi_T+\delta)^m(\alpha_n)\}_{m\ge 1}$ was infinite, then so would be $\{f((\phi_T+\delta)^m(\alpha_n))\}_{m\ge 1}$, but the latter set coincides with $\{\psi^m(\beta_n)\}_{m\ge 1}$, and since $\psi^n(\beta_n)=\beta$ that is preperiodic for $\psi$, this cannot happen. Since $\alpha_n$ is preperiodic for $\phi_T+\delta$, then it is preperiodic for $\phi_T$; since $\phi$ has potential rank $1$ we get that up to replacing $F$ by a finite extension (independent of $n$) $F(\alpha_n)/F$ is abelian. Since $\beta_n\in F(\alpha_n)$ and $n$ was arbitrary, we get that $\psi^{-\infty}(\beta)\subseteq F^{\text{ab}}$, and the claim follows.
	
	$(3)\implies (1)$. By replacing $F$ by a finite extension, we can assume that $G_\infty(\psi,\beta)$ is abelian and that everything is defined over $F$. By fixing an increasing sequence of positive integers $\{n_k\}$ as in \eqref{fixed_subsequence} and considering the fact that $f$ induces a map $(\phi_T+\delta)^{-\infty}(\alpha)\to \psi^{-\infty}(\beta)$ (outside of its poles), with $\psi^{-\infty}(\beta)\subseteq F^{\ab}$ by hypothesis, we are in the position to apply Lemma \ref{drinfeld_cm} to the pair $(\phi,\alpha-\overline{\delta})$. We deduce that $\phi$ has potential rank $1$ and that $\alpha-\overline{\delta}$ is torsion for $\phi$. Since $\overline{\delta}$ is itself torsion, we deduce that $\alpha$ is preperiodic for $\phi$. A second application of the same lemma allows then to conclude.
\end{proof}

In view of Theorem \ref{main_thm}, Corollary \ref{abelian_gal_reps}, and Theorem \ref{abelian_quotients} we thus formulate a positive characteristic analogue of Conjecture \ref{ap_conjecture}. In order to do so we have to take care of a technical issue: in order to talk about the Galois group of $f^n-\alpha$ we need the latter to have roots in $F^{\text{sep}}$. For $f\in F[x]$ with $f'\not\equiv 0$ and $\alpha\in F$ we set
$$C_{f,\alpha}\coloneqq\{c\in \overline{F}\colon f'(c)=0 \mbox{ and } f^m(c)=\alpha\mbox{ for some }m\ge 1\}.$$
It is then elementary to see that $f^n-\alpha$ has all its roots in $F^{\text{sep}}$ for every $n\ge 1$ if and only if $C_{f,\alpha}\subseteq F^{\text{sep}}$. 
\begin{conjecture} \label{our conjecture}
	Let $F/\F_q(T)$ be a finite extension, let $f\in F[x]$ with $f'\not\equiv 0$ have degree $\ge 2$ and be such that $C_{f,\alpha}\subseteq F^{\text{sep}}$, and let $\alpha\in F$. Then $G_\infty(f,\alpha)$ is virtually abelian if and only if one of the following holds:
	\begin{enumerate}
		\item The pair $(f,\alpha)$ is $\overline{F}$-conjugate to a constant pair.
		\item The pair $(f,\alpha)$ is $\overline{F}$-conjugate to a pair $(\psi,\beta)$ where $\psi$ is a Drinfeld-Latt\`es map for a CM Drinfeld module $\phi$ of generic characteristic and $\beta$ is preperiodic for $\psi$.
	\end{enumerate} 
\end{conjecture}
We remark that if $\alpha$ is exceptional for $f$ then the pair $(f,\alpha)$ is necessarily $\overline{F}$-conjugate to a constant pair (see for example \cite[Theorem 1.19]{benedetto}), so we do not need to exclude exceptional points.
 \bibliographystyle{plain}
\bibliography{bibliography}
\end{document}